\def\namedlabel#1#2{\begingroup
    #2%
    \def\@currentlabel{#2}%
    %\phantomsection
    \label{#1}\endgroup
}
\theoremstyle{plain}
\newtheorem{theorem}{Theorem}[section]
\newtheorem{corollary}[theorem]{Corollary}
\newtheorem{lemma}[theorem]{Lemma}
\newtheorem{proposition}[theorem]{Proposition}
\theoremstyle{definition}
\newtheorem{remark}[theorem]{Remark}
\newtheorem{example}[theorem]{Example}
\newtheorem{definition}[theorem]{Definition}
\numberwithin{equation}{section}
\renewcommand\labelenumi{\textup{\alph{enumi})}}
\renewcommand\theenumi\labelenumi%{\textup{\alph{enumi})}}
\makeatletter\renewcommand{\p@enumii}{}\makeatother %\ref concatenates the \theenumi's with
\renewcommand{\leq}{\leqslant}
\renewcommand{\geq}{\geqslant}
\newcommand{\R}{\mathds{R}}
\newcommand{\N}{\mathds{N}}
\newcommand{\I}{\mathds{1}}
\newcommand{\re}{\mathop{\mathrm{Re}}}
\begin{document}

\title[On directional convolution equivalent densities]{On directional convolution equivalent densities}
\author{Kamil Kaleta and Daniel Ponikowski}

\address{Kamil Kaleta and Daniel Ponikowski \\ Faculty of Pure and Applied Mathematics,
  Wroc{\l}aw University of Science and Technology,
  Wybrze{\.z}e Wyspia{\'n}\-skie\-go 27,
  50-370 Wroc{\l}aw, Poland}
	\thanks{Research supported by National Science Centre, Poland, grant no.\ 2019/35/B/ST1/02421}
\email{kamil.kaleta@pwr.edu.pl, d.ponikowski11@gmail.com}

\maketitle

\begin{abstract}
We propose a definition of directional multivariate subexponential and convolution equivalent densities
and find a useful characterization of these notions for a class of integrable 
and almost radial decreasing functions. We apply this result to show that the 
density of the absolutely continuous part of the compound Poisson measure 
built on a given density $f$ is directionally convolution equivalent and inherits its 
asymptotic behaviour from $f$ if and only if $ f$ is directionally convolution equivalent.
We also extend this characterization to the densities of more general infinitely divisible distributions on 
$\R^d$, $d \geq 1$, which are not pure compound Poisson.

\bigskip
\noindent
\emph{Key-words}: multivariate density, almost radial decreasing function, compound Poisson measure, infinitely divisible distribution, L\'evy process, subexponential and convolution equivalent distribution, isotropic unimodal distribution, exponential decay, spatial asymptotics, random sum, cone.

\bigskip
\noindent
2010 {\it MS Classification}: 60E05, 60G50, 60G51, 26B99, 62H05.\\
  
\end{abstract}

%----------------------------------------------------------------
\section{Introduction and statement of results}

\subsection*{Motivation, basic definition and related previous results}
In the recent paper \cite{KSz3} the spatial asymptotics at infinity for the heat kernels of non-local pseudo-differential operators (or transition densities of jump L\'evy processes) have been investigated.  
The key step in that paper was based on the observation, which can be summarized as follows. Let $f$ be a density of the finite measure on $\R^d$, $d \geq 1$, which is uniformly comparable to the radial decreasing function (see the definition below) and satisfies
\begin{equation*} %\label{eq:c1}
        \frac{f(t \theta-y)}{f(t\theta)} \stackrel{t \to \infty}{\longrightarrow} e^{\gamma(\theta \cdot y)}, \quad y \in \R^d,
 \end{equation*}
for a given $\theta \in \mathds{S}^{d-1}$ (the unit sphere in $\R^d$) and $\gamma \geq 0$,
and let the map $K_f:(0,\infty) \to (0,\infty]$ be defined by
$$
K_f(r):=\sup_{|x| \geq 1} \int_{|x-y|>r \atop |y| >r} \frac{f(x-y)f(y)}{f(x)} dy.
$$  
Then the following implication holds: if 
\begin{equation} \label{eq:K0_first}
\lim_{r \to \infty}K_f(r) = 0,
\end{equation}
then
		\begin{equation} \label{eq:c2_0}
        h(\theta): = \int\limits_{\R^d} e^{\gamma(\theta \cdot y)}f(y)dy \textless \infty
    \quad \text{and} \quad
        \lim\limits_{t \to \infty}\frac{f^{n*}(t\theta)}{f(t\theta)} = n h(\theta)^{n-1}, \quad n \in \N,
    \end{equation}
where $f^{n\star}$ denotes the $n$-fold convolution of $f$. In \cite{KSz3} this implication was used to derive the asymptotics for densities of compound Poisson measures and more general convolution semigroups of measures on $\R^d$. Although the condition \eqref{eq:K0_first} is very effective in applications, the question about its optimality remains open. This was a primary motivation for our present considerations.  

The main goal of this paper is to show that \eqref{eq:c2_0} also implies \eqref{eq:K0_first}, i.e.\ the condition \eqref{eq:K0_first} in fact fully characterizes the convergence in \eqref{eq:c2_0} (Theorem \ref{th:main} and Corollary \ref{cor:characterization}). We also want to look at \eqref{eq:c2_0} from a slightly different perspective -- this condition can be seen as a multivariate directional variant of the convergence known from the theory of subexponential and convolution equivalent densities on $(0,\infty)$, see Kl\"uppelberg \cite{Klup} and references therein. We define formally the class of directional convolution equivalent densities $\mathscr{S}_d(\theta, \gamma)$ on $\R^d$, $d \geq 1$, and analyse some of its properties.
Our second main result says that the density $f$ which is uniformly comparable to a radial decreasing function is in $\mathscr{S}_d(\theta, \gamma)$ if and only if the density of the corresponding compound Poisson measure belongs to $\mathscr{S}_d(\theta, \gamma)$  and inherits its 
asymptotic behaviour from $f$ (see Theorem \ref{th:poisson_characterization} for precise statement). We also extend this characterization to the densities of fairly general infinitely divisible distributions on $\R^d$, $d \geq 1$, which are not pure compound Poisson, including those built on infinite L\'evy measures (Theorem \ref{th:inf_div_character}). The argument is based on \eqref{eq:K0_first}. Our new Theorem \ref{th:main} is completely critical for the proof of these results.

Throughout we consider non-negative, non-zero functions $f \in L^1(\R^d)$ (taken with respect to the Lebesgue measure) that we call \emph{densities}. We propose the following definition.

\begin{definition}[\textbf{Directional subexponential and convolution equivalent densities}] \label{def:main_def} 
{\rm \ \newline
Let $f \in L^1(\R^d)$, $f \geq 0$ and let $\theta \in \mathds{S}^{d-1}$ and $t_0>0$ be such that $f(t\theta) >0$, for $t \geq t_0$. 
We say that $f$ belongs to the class $\mathscr{S}_d(\theta, \gamma)$ with $\gamma \geq 0$ if
    \begin{equation} \label{eq:c1}
        \frac{f(t \theta-y)}{f(t\theta)} \stackrel{t \to \infty}{\longrightarrow} e^{\gamma(\theta \cdot y)}, \quad  \text{for every} \ \ y \in \R^d;
    \end{equation}
		\begin{equation} \label{eq:c2}
        h(\theta): = \int\limits_{\R^d} e^{\gamma(\theta \cdot y)}f(y)dy \textless \infty
    \quad \text{and} \quad
        \frac{f^{2\star} (t\theta)}{f(t\theta)} \stackrel{t \to \infty}{\longrightarrow} 2h(\theta).
    \end{equation}		
The class $\mathscr{S}_d(\theta, 0)$ will be called the \emph{class of subexponential densities in the direction $\theta \in \mathds{S}^{d-1}$}. More generally, $\mathscr{S}_d(\theta, \gamma)$, $\gamma \geq 0$, will be called the \emph{class of convolution equivalent densities with parameter $\gamma$ in the direction $\theta \in \mathds{S}^{d-1}$}. 
}
\end{definition} 

The study of the subexponential and convolution equivalent distributions on half-line dates back to papers by Chistyakov \cite{Chist}, Athreya and Ney \cite{AN}, Chover, Ney and Wainger \cite{CNW73,CNW73b}) (see also the monographs \cite{EKM} by Embrechts, Kl\"uppelberg and Mikosch, and \cite{FKZ} by Foss, Korshunov and Zachary, and further references therein). They were mainly investigated in connection with branching processes, renewal and queueing theory, random walks, and infinitely divisible distributions. The key applications were based on an analysis of the asymptotic behaviour of distributions of the so-called random sums which were constructed for given initial distributions. A key example of such a random sum is that of Poissonian type (the compound Poisson measure). A basic result obtained for this example states that the compound Poisson distribution inherits its asymptotics at infinity from the initial distribution if and only if this distribution is convolution equivalent, see Embrechts, Goldie and Veraverbeke \cite{EGV} for the subexponential case, Embrechts and Goldie \cite{EG} for general convolution equivalent distributions, and Cline \cite{Cl} for an extension to more general random sums. More recently, similar asymptotic results have been obtained for general one-dimensional infinitely divisible laws by Sgibnev \cite{Sg}, Pakes \cite{Pa1, Pa2}, Shimura and Watanabe \cite{W05}, Watanabe \cite{Wat2} and Watanabe and Yamamuro \cite{W10, WY} (see also Yakymiv \cite{Y}).
The papers which are most closely related to our present contributions are works by Kl\"uppelberg \cite{Klup}, Finkelshtein and Tkachov \cite{FT} and Watanabe \cite{W20} which develop the theory of convolution equivalent and subexponential \emph{densities} on the half-line and the line. Note, however, that the densities have been considered much earlier by Chover, Ney and Wainger \cite{CNW73,CNW73b}. See also Foss and Zachary \cite{FZ}, Korshunov \cite{Kor} and Wang and Wang \cite{WW} for applications.  

Interestingly, there is no canonical definition of subexponential and convolution equivalent distributions in $\R^d$, $d >1$. Depending on applications, one can find at least three different definitions in the literature, see Cline and Resnick \cite{CR92}, Omey \cite{Omey} (see also Omey, Mallor and Santos \cite{OMS}) and recent paper by Samorodnitsky and Sun \cite{SS}. We refer the reader to an excellent overview of the existing multivariate theory given in \cite{SS}. 
Recently, the results from \cite{Omey,OMS} have been used by Knopova \cite{Knop} to prove the so-called `bell-like estimates' for the densities of L\'evy processes in $\R^d$, and by Knopova and Palmowski \cite{KP} to establish the asymptotics of the potential of some killed Markov proceses (with applications to insurance models).
The authors of \cite{SS} also applied their approach to compute the asymptotic behaviour of the ruin probability in a certain insurance model. 

We are not aware of any previous papers dealing directly with convolution equivalent densities in $\R^d$, $d>1$. Finkelshtein and Tkachov introduced in \cite{FT}
the class of regular subexponential densities on the line and used it to find an analogue of Kesten's bound for radial functions on $\R^d$, $d \geq 1$. The authors of that paper remark that even in the radial case the study of the asymptotic behaviour of convolutions of multivariate densities is a rather difficult and open problem (see discussion on p.\ 375 in \cite{FT}; see also comments preceding Theorem 3 in \cite{Knop}). It seems to be due to the fact that although the convolution of two multivariate radial functions is still radial, its analysis cannot be directly reduced to the study of univariate functions. In this paper, we propose a directional approach. The results in \cite{KSz3} and our present contributions can be seen as a first attempt to this problem.

Observe that the above definition allows one to consider the convolution equivalence in any generalized cone in $\R^d$ centered at $0$. Let $E$ be an arbitrary non-empty subset of the unit sphere $\mathds{S}^{d-1}$. By $\Gamma_E$ we denote the generalized cone in $\R^d$ centered at $0$ and based on $E$, i.e. 
$$
\Gamma_E = \left\{x \in \R^d \setminus \left\{0\right\}: \frac{x}{|x|} \in E \right\}.
$$
If $f \in \mathscr{S}_d(\theta, \gamma)$ for every $\theta \in E$, then it can be seen as a \emph{convolution equivalent density in cone $\Gamma_E$.} Specifically, the choice $\Gamma_E = (0,\infty)^d$ links our framework with that of \cite{Omey}. We expect that such a flexibility will allow one to find various applications for directional convolution equivalence, similar to those in \cite{KP} and \cite{SS}. Let us also remark that Definition \ref{def:main_def} naturally generalizes the definition for the densities on the real line (cf.\ e.g.\ \cite[Definition 3]{Klup}).
The set $(0,\infty)$ is an example of a generalized cone in $\R$. In general, in this case one can consider $E \subset \mathds{S}^{0} = \left\{-1,1\right\}$.

\subsection*{Presentation of our results and further discussion}

In this paper we develop the concept of directional convolution equivalence for a class of multivariate densities which are uniformly comparable to radial decreasing functions. 
We say that the density $f$ is \emph{almost radial decreasing} if there exists a decreasing function $g : (0, \infty) \rightarrow (0, \infty)$ such that $f(x) \asymp g(|x|)$, $x \in \R^d \setminus \left\{0\right\}$ (we say that $g:(0,\infty) \to \R$ is decreasing if $g(s) \geq g(t)$ for all $s, t >0$ such that $s \leq t$; we never require strict monotonicity). The function $g$ is then called a \emph{profile of the density} $f$. The notation $f(x) \asymp h(x)$, $x \in D$, means that there exists
a constant $c \geq 1$ such that
\begin{equation*}
    \frac{1}{c} h(x) \leqslant f(x) \leqslant c h(x), \quad x \in D.
\end{equation*}

As we mentioned above, a sufficient condition for directional convolution equivalence, given in terms of $K_f$, follows from results in \cite{KSz3}.

\begin{proposition}{\cite[Corollary 4 and Lemma 6(a)]{KSz3}}\label{prop:sufficient}
Let $f$ be an almost radial decreasing density and let \eqref{eq:c1} holds with some $\theta \in \mathds{S}^{d-1}$ and $\gamma \geq 0$.  Then the condition 
\begin{equation} \label{eq:K0}
\lim_{r \to \infty}K_f(r) = 0
\end{equation}
implies that $h(\theta) < \infty$ and the convergence
\begin{equation} \label{eq:conv_conv}
\lim\limits_{t \to \infty}\frac{f^{n*}(t\theta-y)}{f(t\theta)}=e^{\gamma(\theta \cdot y)} n h(\theta)^{n-1}, \quad y \in \R^d, \ \ n \in \N,
\end{equation} 
holds. In particular, $f \in \mathscr{S}_d(\theta, \gamma)$.
\end{proposition}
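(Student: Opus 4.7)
The proposition is attributed to results in \cite{KSz3}, but the natural proof strategy is a direct Kesten-type comparison argument using the symmetric decomposition together with the tail control provided by $K_f$. The plan is to treat the case $n=2$, $y=0$ first, then bootstrap. Write
\begin{equation*}
f^{2\star}(t\theta) = 2 \int_{z\cdot \theta < t/2} f(t\theta - z) f(z)\, dz,
\end{equation*}
which follows from the substitution $w = t\theta - z$. For fixed $r > 0$ and $t > 2r$, split the domain into $\{|z| < r\}$ and $\{|z| \geq r,\, z\cdot \theta < t/2\}$. On the latter region, the condition $z\cdot \theta < t/2$ forces $(t\theta-z) \cdot \theta > t/2 > r$, so $|t\theta - z| \geq r$ as well; hence this contribution divided by $f(t\theta)$ is bounded by $K_f(r)$ as soon as $t \geq 1$.

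On the local region $\{|z| < r\}$, the pointwise convergence $f(t\theta-z)/f(t\theta) \to e^{\gamma(\theta\cdot z)}$ is given by \eqref{eq:c1}. The crucial step is to produce a $t$-uniform dominating function here. Using the profile $g$ from the almost radial decreasing assumption and the one-point convergence $f((t-r)\theta)/f(t\theta) \to e^{\gamma r}$ coming from \eqref{eq:c1} applied to $y = r\theta$, one obtains for all $|z| \leq r$ and $t$ large enough a bound of the form $f(t\theta-z)/f(t\theta) \leq C(r)$. Dominated convergence then yields
\begin{equation*}
\lim_{t \to \infty}\int_{|z| < r} \frac{f(t\theta - z) f(z)}{f(t\theta)}\,dz = \int_{|z|<r} e^{\gamma(\theta\cdot z)} f(z)\,dz.
\end{equation*}
Combining the two pieces produces the $t$-uniform bound $f^{2\star}(t\theta)/f(t\theta) \leq 2 C(r)\|f\|_1 + 2K_f(r) < \infty$. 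Applying Fatou's lemma to $\int (f(t\theta-z)/f(t\theta)) f(z) \I_{\{z\cdot\theta<t/2\}}\,dz$ and multiplying by $2$ gives $2h(\theta) \leq \liminf_t f^{2\star}(t\theta)/f(t\theta) < \infty$, so $h(\theta) < \infty$. Inserting this into the symmetric decomposition and letting $r \to \infty$ (using $K_f(r) \to 0$) gives $\lim_t f^{2\star}(t\theta)/f(t\theta) = 2h(\theta)$.

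For general $y \in \R^d$, a minor variant of the same splitting applied to $f^{2\star}(t\theta - y)$ handles the shift; equivalently, one shows $f^{2\star}(t\theta-y)/f^{2\star}(t\theta) \to e^{\gamma(\theta\cdot y)}$ by the same dominated convergence / tail-bound argument with $f^{2\star}$ in place of $f$ and multiplies by the result just obtained. The case $n \geq 3$ then follows by induction: write
\begin{equation*}
\frac{f^{(n+1)\star}(t\theta - y)}{f(t\theta)} = \int \frac{f^{n\star}(t\theta - y - z)}{f(t\theta)}\,f(z)\,dz
\end{equation*}
and repeat the split into $|z| < r$ and the $K_f$-type tail, with the local contribution computed via the induction hypothesis and Lebesgue convergence.

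The main obstacle is the dominated convergence step in the local region: obtaining a uniform-in-$t$ majorant for $f(t\theta-z)/f(t\theta)$ on $|z| < r$ is precisely where one needs both the almost radial decreasing structure (to replace the value at $t\theta - z$ by a radial bound at the worst-case distance $t-r$) and the directional asymptotic \eqref{eq:c1} (to translate this radial bound into a bound proportional to $e^{\gamma r}$). Once this is established, the rest of the argument is a now-standard Fatou plus $K_f \to 0$ manipulation.
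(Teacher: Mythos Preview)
The paper does not give its own proof of this proposition; it is simply quoted from \cite[Corollary~4 and Lemma~6(a)]{KSz3}. Your argument for the base case $n=2$, $y=0$ is correct and is essentially the standard Kesten--Chover--Ney--Wainger approach: symmetric half-space split, $K_f(r)$ control on the tail, a profile-based majorant $f(t\theta-z)/f(t\theta)\le c^2 g(t-r)/g(t)\le C(r)$ for dominated convergence on the local piece, and Fatou to obtain $h(\theta)<\infty$.

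The induction step, however, has a genuine gap. For $f^{(n+1)\star}=f^{n\star}*f$ the two factors are no longer symmetric, so the half-space split you used for $n=2$ no longer forces both arguments to exceed $r$ on the complement of $\{|z|<r\}$. One must use a three-way decomposition into $\{|z|<r\}$, $\{|t\theta-y-z|<r\}$, and $\{|z|>r,\ |t\theta-y-z|>r\}$; the second local piece contributes the extra summand $e^{\gamma(\theta\cdot y)}\int e^{\gamma(\theta\cdot w)}f^{n\star}(w)\,dw=e^{\gamma(\theta\cdot y)}h(\theta)^n$ needed to reach $(n+1)h(\theta)^n$. More importantly, both the dominated convergence on $\{|z|<r\}$ and the reduction of the tail to $K_f(r)$ require a uniform Kesten-type bound $M_n:=\sup_{|x|\ge 1}f^{n\star}(x)/f(x)<\infty$, propagated through the induction. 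Without it you have no majorant for $f^{n\star}(t\theta-y-z)/f(t\theta)$ on the local region (your profile argument only covers $n=1$), and no way to bound the middle integral by a multiple of $K_f(r)$. Establishing $M_n<\infty$ is exactly what ``Lemma~6(a)'' in the citation supplies; it is proved by the same three-way split together with the almost radial decreasing structure (to pass from the ray $t\theta$ to all $|x|\ge 1$ via $f^{n\star}(x)\le c^{2n}f^{n\star}(|x|\theta)$), and you should make this ingredient explicit.
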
 

Our first main result in this paper states that also the converse implication is true. 
We need here some additional notation:
$$
K_f^A(r) := \sup_{|x|\geq A} \int_{\substack{|x-y| \textgreater r \\|y| \textgreater r}}  \frac{f(x-y)f(y)}{f(x)}dy, \quad r \geq A.
$$
Clearly, $K^1_f(r) = K_f(r)$, $r \geq 1$.

\begin{theorem} \label{th:main}
Let the density $f$ be such that there exist $A \geq 1$ and a decreasing function $g:[A,\infty) \to (0,\infty)$ satisfying $f(x) \asymp g(|x|)$, $|x| \geq A$. Then the conjunction of \eqref{eq:c1} and \eqref{eq:c2} implies 
\begin{equation} \label{eq:KA0}
\lim_{r \to \infty}K^A_f(r) = 0.
\end{equation}
In particular, if $f$ is almost radial decreasing, then the conjunction of \eqref{eq:c1} and \eqref{eq:c2} implies \eqref{eq:K0}.
\end{theorem}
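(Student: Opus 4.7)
The plan is to show that the conjunction of \eqref{eq:c1} and \eqref{eq:c2} forces the two-sided tail
\[
I_2(x,r):=\int_{|y|>r,\,|x-y|>r}\frac{f(x-y)f(y)}{f(x)}\,dy
\]
to vanish as $r\to\infty$, uniformly in $|x|\geq A$. First I would exploit the symmetry $y\mapsto x-y$ to write, for $|x|>2r$,
\[
\frac{f^{2\star}(x)}{f(x)}=2I_1(x,r)+I_2(x,r),\qquad I_1(x,r):=\int_{|y|\leq r}\frac{f(x-y)f(y)}{f(x)}\,dy,
\]
noting that the overlap $\{|y|\leq r\}\cap\{|x-y|\leq r\}$ is empty in this regime. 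Specializing $x=t\theta$, \eqref{eq:c1} evaluated at the single point $y=r\theta$ combined with $f\asymp g(|\cdot|)$ gives that $g(t-r)/g(t)$ stays bounded for large $t$, hence a constant envelope $C_r\,f(y)$ dominates the integrand in $I_1(t\theta,r)$ on $\{|y|\leq r\}$. Dominated convergence then yields $I_1(t\theta,r)\to h_r(\theta):=\int_{|y|\leq r}e^{\gamma(\theta\cdot y)}f(y)\,dy$, and combined with \eqref{eq:c2} this forces $\lim_{t\to\infty}I_2(t\theta,r)=\delta(r):=2(h(\theta)-h_r(\theta))$, with $\delta(r)\to 0$ as $r\to\infty$ by monotone convergence (the integrand defining $h$ is nonnegative and $h(\theta)<\infty$ by \eqref{eq:c2}).

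Next I would transfer the directional information to arbitrary $|x|\geq A$. By rotational invariance, $\int_{|y|>r,|x-y|>r}g(|x-y|)g(|y|)\,dy$ depends on $x$ only through $|x|$, so the almost radial decreasing bound with constant $c$ gives the pointwise comparison
\[
I_2(x,r)\leq c^{6}\,I_2(|x|\theta,r),\qquad |x|\geq A,\ r\geq A.
\]
The task thus reduces to proving $\sup_{t\geq A}I_2(t\theta,r)\to 0$ as $r\to\infty$. This is the main obstacle: the pointwise-in-$t$ convergence $I_2(t\theta,r)\to\delta(r)$ does not on its own control the supremum over $t$, and one must combine it with a separate estimate on a moderate range of $t$.

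To bridge the gap I would exploit the monotonicity $I_2(x,r)\leq I_2(x,r_0)$ for $r\geq r_0$ (the integration set shrinks with $r$). Given $\varepsilon>0$, first pick $r_0$ with $\delta(r_0)<\varepsilon$ and then, using the convergence $I_2(t\theta,r_0)\to\delta(r_0)$ at this single value $r_0$, pick $T_0\geq A$ with $I_2(t\theta,r_0)<2\varepsilon$ for $t\geq T_0$. Monotonicity then gives $I_2(t\theta,r)<2\varepsilon$ for all $r\geq r_0$ and all $t\geq T_0$. For the bounded range $A\leq t\leq T_0$ and $r>T_0+A$, on $\{|y|>r,|t\theta-y|>r\}$ one has $|t\theta-y|\geq r-T_0$, so monotonicity of $g$ together with $f\asymp g(|\cdot|)$ gives
\[
I_2(t\theta,r)\leq\frac{c^{2}\,g(r-T_0)}{g(T_0)}\int_{|y|>r}f(y)\,dy,
\]
with both factors tending to $0$ as $r\to\infty$ (the first because $g(s)\to 0$ is forced by $g(|\cdot|)\in L^1(\R^d)$ and monotonicity of $g$, the second by $f\in L^1$). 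Combining the two ranges yields $K_f^A(r)\leq 3c^{6}\varepsilon$ for all sufficiently large $r$, proving \eqref{eq:KA0}; the almost radial decreasing case is the specialization $A=1$.
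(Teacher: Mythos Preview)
Your argument is correct and follows essentially the same route as the paper's proof: the decomposition $f^{2\star}(t\theta)/f(t\theta)=2I_1(t\theta,r)+I_2(t\theta,r)$, the dominated-convergence step for $I_1$ (using \eqref{eq:c1} at the single point $y=r\theta$ to bound $g(t-r)/g(t)$), the rotational reduction $I_2(x,r)\le c^6 I_2(|x|\theta,r)$, and the $\varepsilon$--$r_0$--$T_0$ argument exploiting monotonicity of $I_2$ in $r$ are all exactly what the paper does (its Lemma~2.1(a) is precisely your computation of $\lim_{t\to\infty}I_2(t\theta,r)=\delta(r)$).

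The one genuine difference is your treatment of the bounded range $A\le t\le T_0$. The paper handles this via a Dini-type argument: it observes that $t\mapsto G^f_\theta(t,r)$ is continuous (because $f_r=f\mathbf{1}_{\{|y|>r\}}\in L^1\cap L^\infty$, so $f_r^{2\star}$ is continuous), that $r\mapsto G^f_\theta(t,r)$ is decreasing to $0$ for each $t$, and then invokes Dini's theorem to upgrade pointwise to uniform convergence on $[A,T_0]$. Your direct bound
\[
I_2(t\theta,r)\le \frac{c^2\,g(r)}{g(T_0)}\int_{|y|>r}f(y)\,dy
\]
(you wrote $g(r-T_0)$, which is also valid but unnecessarily loose since $|t\theta-y|>r$ already) is more elementary: it uses only the monotonicity of $g$ and $f\in L^1$, and avoids the continuity lemma entirely. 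This is a small but clean simplification of the paper's argument.
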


The following corollaries, which give the full characterization, are a direct consequence of Proposition \ref{prop:sufficient} and Theorem \ref{th:main}.

\begin{corollary}\label{cor:characterization}
Let $f$ be an almost radial decreasing density and let \eqref{eq:c1} holds with some $\theta \in \mathds{S}^{d-1}$ and $\gamma \geq 0$.  Then $f \in \mathscr{S}_d(\theta, \gamma)$ if and only if $\lim_{r \to \infty}K_f(r) = 0$.
\end{corollary}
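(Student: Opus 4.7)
The plan is to obtain the corollary by direct concatenation of the two results already stated in this subsection. Since the statement is asserted to be a direct consequence of Proposition \ref{prop:sufficient} and Theorem \ref{th:main}, the role of the proof is simply to verify that each implication matches the hypotheses of the corresponding statement.

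For the ``if'' direction, I would assume $\lim_{r \to \infty} K_f(r) = 0$ and combine it with the standing hypothesis \eqref{eq:c1} to invoke Proposition \ref{prop:sufficient}. The proposition immediately yields both $h(\theta) < \infty$ and the convergence \eqref{eq:conv_conv} for every $n \in \N$ and every $y \in \R^d$. Specializing to $n = 2$ and $y = 0$ produces the second half of \eqref{eq:c2}, namely $f^{2\star}(t\theta)/f(t\theta) \to 2 h(\theta)$. Together with the assumed \eqref{eq:c1}, Definition \ref{def:main_def} is verified, so $f \in \mathscr{S}_d(\theta, \gamma)$.

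For the ``only if'' direction, I would appeal to Theorem \ref{th:main}. The almost radial decreasing hypothesis on $f$ supplies a decreasing profile $g:(0,\infty)\to(0,\infty)$ with $f(x) \asymp g(|x|)$ on $\R^d \setminus \{0\}$; restricting the comparison to $|x| \geq 1$ places us in the setting of Theorem \ref{th:main} with $A = 1$. Membership of $f$ in $\mathscr{S}_d(\theta, \gamma)$ is, by Definition \ref{def:main_def}, precisely the conjunction of \eqref{eq:c1} and \eqref{eq:c2}, so the theorem delivers $\lim_{r \to \infty} K_f^1(r) = 0$. Since $K_f^1 \equiv K_f$, this is the conclusion we need.

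Since all the analytic content has been isolated in Proposition \ref{prop:sufficient} and Theorem \ref{th:main}, there is no substantive obstacle at this stage; the only minor point requiring attention is that the parameter $A$ in Theorem \ref{th:main} can be taken equal to $1$, which is automatic because ``almost radial decreasing'' is a global condition on $\R^d \setminus \{0\}$.
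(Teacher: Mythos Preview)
Your proof is correct and matches the paper's approach exactly: the paper states that the corollary is ``a direct consequence of Proposition \ref{prop:sufficient} and Theorem \ref{th:main}'' and gives no separate argument. If anything, you could shorten slightly by noting that Proposition \ref{prop:sufficient} already concludes ``In particular, $f \in \mathscr{S}_d(\theta,\gamma)$'', and Theorem \ref{th:main} already contains the clause ``if $f$ is almost radial decreasing, then the conjunction of \eqref{eq:c1} and \eqref{eq:c2} implies \eqref{eq:K0}'', so neither the specialization to $n=2$, $y=0$ nor the explicit identification $A=1$ needs to be spelled out.
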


We illustrate this result by examples in Section \ref{sec:ex}. In Example \ref{ex:ex1} we prove that if the density $f$ satisfying \eqref{eq:c1} (with $\gamma = 0$) has a doubling property, then we always have $\lim_{r \to \infty}K_f(r) = 0$ and, in consequence, $f \in \mathscr{S}_d(\theta,0)$. In Example \ref{ex:ex2} we analyse the density $f$ on $\R^d$, $d \geq 1$, with the profile 
$$
g(r) = e^{-m r} (1 \vee r)^{-\beta}, \quad m > 0, \ \ \beta \geq 0,
$$
which fulfils \eqref{eq:c1}. We show that $f \in \mathscr{S}_d(\theta, m)$ if and only if $\beta > (d+1)/2$ (Proposition \ref{prop:ex2}), cf.\ \cite[Example to Corollary 2.2]{Klup}. These examples demonstrate well the utility of \eqref{eq:KA0}. The main advantage of the approach based on the map $K_f(r)$ is that instead of proving directly the difficult convergence in \eqref{eq:c2} we can just try to establish the estimate which leads to \eqref{eq:KA0}. 

The next corollary should be compared with analogous one-dimensional results in \cite[Lemma 3.1 (c)]{Klup} and \cite[Theorem 1.1]{FT}. 

\begin{corollary}\label{cor:characterization_2}
Let $f$ be an almost radial decreasing density satisfying \eqref{eq:c1} with some $\theta \in \mathds{S}^{d-1}$ and $\gamma \geq 0$.  Then $f \in \mathscr{S}_d(\theta, \gamma)$ if and only if $h(\theta) < \infty$ and the convergence \eqref{eq:conv_conv} holds.
\end{corollary}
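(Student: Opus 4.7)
The plan is to obtain Corollary \ref{cor:characterization_2} as a direct combination of Proposition \ref{prop:sufficient} and Theorem \ref{th:main}; once those results are available, both implications reduce to unpacking Definition \ref{def:main_def}.

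For the easy direction, assume $h(\theta) < \infty$ and that the convergence \eqref{eq:conv_conv} holds. I would specialize \eqref{eq:conv_conv} to $n = 2$ and $y = 0$, which gives
\begin{equation*}
\lim_{t \to \infty}\frac{f^{2\star}(t\theta)}{f(t\theta)} = 2 h(\theta).
\end{equation*}
Together with the standing hypothesis \eqref{eq:c1} and the finiteness of $h(\theta)$, this is exactly the content of \eqref{eq:c2}, so $f \in \mathscr{S}_d(\theta, \gamma)$ by Definition \ref{def:main_def}. No further analysis is needed on this side.

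For the nontrivial direction, suppose $f \in \mathscr{S}_d(\theta, \gamma)$. By definition both \eqref{eq:c1} and \eqref{eq:c2} hold, so in particular $h(\theta) < \infty$. Since $f$ is almost radial decreasing, the hypotheses of Theorem \ref{th:main} are satisfied (with $A = 1$, say), and the theorem delivers $\lim_{r \to \infty} K_f(r) = 0$, i.e.\ condition \eqref{eq:K0}. With \eqref{eq:c1} and \eqref{eq:K0} now both in hand, Proposition \ref{prop:sufficient} applies and yields the full convergence \eqref{eq:conv_conv} for every $n \in \N$ and every $y \in \R^d$.

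The main obstacle for this corollary has in fact already been overcome in Theorem \ref{th:main}: the passage from the ``minimal'' convergence \eqref{eq:c2} (the case $n=2$, $y=0$) to the uniform-in-$y$, all-$n$ convergence \eqref{eq:conv_conv} is precisely what \eqref{eq:K0} buys us, and Theorem \ref{th:main} is what manufactures \eqref{eq:K0} from \eqref{eq:c2} in the almost radial decreasing setting. Consequently, no inductive argument on $n$ and no direct manipulation of $n$-fold convolutions $f^{n\star}$ appears at this stage, and the proof I would write is essentially a two-line citation.
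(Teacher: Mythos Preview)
Your proof is correct and matches the paper's approach exactly: the paper states that Corollary~\ref{cor:characterization_2} is ``a direct consequence of Proposition~\ref{prop:sufficient} and Theorem~\ref{th:main}'' without further detail, and you have simply spelled out the two implications, specializing \eqref{eq:conv_conv} to $n=2$, $y=0$ for one direction and invoking Theorem~\ref{th:main} followed by Proposition~\ref{prop:sufficient} for the other.
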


We now present our theorems for densities of compound Poisson and more general infinitely divisible distributions. Their proofs are based on an application of Theorem \ref{th:main}.

Let $f \in L^1(\R^d)$ and let $P_{\lambda}(dx)$, $\lambda>0$, be the corresponding compound Poisson measure on $\R^d$ which is given by 
$$
P_{\lambda}(dx) = e^{-\lambda\left\|f\right\|_1} \delta_0(dx) + p_{\lambda}(x)dx, 
$$
where $\delta_0(dx)$ is the Dirac delta measure and
\begin{align}\label{eg:poiss_dens}
p_{\lambda}(x) = e^{-\lambda\left\|f\right\|_1} \sum_{n=1}^{\infty} \frac{\lambda^n}{n!} f^{n \star}(x).
\end{align}
 
The following theorem is our second main result. It can be seen as a multivariate Poissonian variant of the one-dimensional result in \cite[Theorem 3.2]{Klup}. 

\begin{theorem}\label{th:poisson_characterization}
Let $f \in L^1(\R^d,dx)$, $f \geq 0$, be an almost radial decreasing density such that \eqref{eq:c1} holds with some $\theta \in \mathds{S}^{d-1}$ and $\gamma \geq 0$. Then the following statements are equivalent.
\begin{itemize}
\item[(a)] $f \in \mathscr{S}_d(\theta, \gamma)$;
\item[(b)] We have
\begin{align} \label{eq:char_exp}
\int_{\R^d} e^{\gamma(\theta \cdot y)} f(y) dy < \infty
\end{align}
and for every $\lambda>0$, 
\begin{align} \label{eq:char_conv}
\lim_{t \to \infty} \frac{p_{\lambda}(t\theta - y)}{\lambda f(t\theta)} = \exp{\left(\gamma(\theta \cdot y) + \lambda \int\left(e^{\gamma (\theta \cdot z)} - 1\right) f(z)dz\right)}, \quad y \in \R^d;
\end{align}
\item[(c)] For some (every) $\lambda>0$, $p_{\lambda} \in \mathscr{S}_d(\theta, \gamma)$ and $\limsup_{|x| \to \infty} \frac{p_{\lambda}(x)}{f(x)} < \infty$.
\end{itemize}
\end{theorem}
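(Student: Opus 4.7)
The plan is to establish the equivalences via a cyclic chain (a) $\Rightarrow$ (b) $\Rightarrow$ (c) $\Rightarrow$ (a). The fundamental lever throughout is Corollary \ref{cor:characterization}, which equates membership in $\mathscr{S}_d(\theta,\gamma)$ with the decay $K_f(r)\to 0$ for almost radial decreasing densities satisfying \eqref{eq:c1}.

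For (a) $\Rightarrow$ (b), Corollary \ref{cor:characterization} gives $K_f(r)\to 0$, and Proposition \ref{prop:sufficient} yields the pointwise convergence $f^{n\star}(t\theta-y)/f(t\theta)\to e^{\gamma(\theta\cdot y)}\,n\,h(\theta)^{n-1}$ for every $n\in\N$. To interchange limit and sum in the Poisson series
\[
\frac{p_\lambda(t\theta-y)}{\lambda f(t\theta)}=\frac{e^{-\lambda\|f\|_1}}{\lambda}\sum_{n\geq 1}\frac{\lambda^n}{n!}\,\frac{f^{n\star}(t\theta-y)}{f(t\theta)},
\]
I must establish a Kesten-type dominating bound $f^{n\star}(x)/f(x)\leq C_\epsilon\,n(h(\theta)+\epsilon)^{n-1}$ valid for $|x|\geq R_\epsilon$, by induction on $n$: split the convolution integral defining $f^{n\star}(x)$ into a near-origin part, a near-$x$ part, and a mutually far-field region, controlling the first two via the almost radial profile and the exponential weight from \eqref{eq:c1}, and the third via $K_f(r)\to 0$. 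Dominated convergence then collapses the series to $e^{\gamma(\theta\cdot y)}\,e^{\lambda(h(\theta)-\|f\|_1)}$, which is \eqref{eq:char_conv} after identifying $h(\theta)-\|f\|_1=\int(e^{\gamma(\theta\cdot z)}-1)f(z)\,dz$.

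For (b) $\Rightarrow$ (c), I verify $p_\lambda\in\mathscr{S}_d(\theta,\gamma)$ in pieces. Dividing \eqref{eq:char_conv} at $y$ by the same relation at $y=0$ yields \eqref{eq:c1} for $p_\lambda$. A termwise Fubini computation gives $h_{p_\lambda}(\theta)=e^{-\lambda\|f\|_1}(e^{\lambda h(\theta)}-1)<\infty$ from $h(\theta)<\infty$. For the convolution limit in \eqref{eq:c2} for $p_\lambda$, I use the Poissonian semigroup identity $P_\lambda\ast P_\lambda=P_{2\lambda}$, whose absolutely continuous part reads $p_\lambda^{2\star}=p_{2\lambda}-2e^{-\lambda\|f\|_1}p_\lambda$; dividing by $p_\lambda(t\theta)$ and invoking \eqref{eq:char_conv} at parameters $\lambda$ and $2\lambda$ (with $y=0$) produces exactly $2h_{p_\lambda}(\theta)$ in the limit. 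For the global bound $\limsup_{|x|\to\infty}p_\lambda(x)/f(x)<\infty$ I sum the same Kesten-type dominating bound from the previous paragraph; note that this bound is uniform in direction, not restricted to $\theta$.

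For (c) $\Rightarrow$ (a), the trivial lower bound $p_\lambda(x)\geq e^{-\lambda\|f\|_1}\lambda f(x)$ combined with the assumed $\limsup p_\lambda/f<\infty$ forces $p_\lambda\asymp f$ for $|x|$ large, so $p_\lambda$ inherits the almost radial decreasing property of $f$. Together with $p_\lambda\in\mathscr{S}_d(\theta,\gamma)$ (giving \eqref{eq:c1} for $p_\lambda$), Corollary \ref{cor:characterization} applied to $p_\lambda$ yields $K_{p_\lambda}(r)\to 0$; and since the integration region in both $K_f$ and $K_{p_\lambda}$ forces $|x|, |y|, |x-y|$ all to exceed $r$ and hence lie in the comparability regime, the asymptotic equivalence $p_\lambda\asymp f$ transfers to $K_f(r)\asymp K_{p_\lambda}(r)$ for large $r$. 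Hence $K_f(r)\to 0$, and a final application of Corollary \ref{cor:characterization} to $f$ gives $f\in\mathscr{S}_d(\theta,\gamma)$. The principal technical obstacle throughout is the Kesten-type bound on $f^{n\star}/f$ in the directional multivariate setting: its inductive derivation from $K_f(r)\to 0$ and the almost radial profile is the engine behind both the dominated convergence in (a) $\Rightarrow$ (b) and the $\limsup$ bound in (b) $\Rightarrow$ (c).
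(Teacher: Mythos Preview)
Your cyclic structure and most of the individual steps match the paper's proof. The verification of $p_\lambda\in\mathscr{S}_d(\theta,\gamma)$ in (b)~$\Rightarrow$~(c) via the semigroup identity $p_\lambda^{2\star}=p_{2\lambda}-2e^{-\lambda\|f\|_1}p_\lambda$ is exactly what the paper does, and your (c)~$\Rightarrow$~(a) is essentially the content of the paper's Lemma~\ref{lem:comparable} (though you should invoke Theorem~\ref{th:main} rather than Corollary~\ref{cor:characterization} for $p_\lambda$, since $p_\lambda$ is only comparable to a decreasing profile for $|x|\geq A$, not globally).

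There is, however, a genuine logical gap in your (b)~$\Rightarrow$~(c). To establish $\limsup_{|x|\to\infty}p_\lambda(x)/f(x)<\infty$ you invoke ``the same Kesten-type dominating bound from the previous paragraph''. But that bound was derived from $K_f(r)\to 0$, which you obtained from hypothesis~(a) via Corollary~\ref{cor:characterization}. In the implication (b)~$\Rightarrow$~(c) you are assuming only~(b), and~(b) does not by itself give $K_f(r)\to 0$ --- indeed, extracting the behaviour of $f^{2\star}(t\theta)/f(t\theta)$ from the convergence of the full Poisson series $p_\lambda(t\theta)/f(t\theta)$ is precisely the hard direction. So the argument is circular as written.

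The paper closes this gap with a different device that uses only the almost radial decreasing hypothesis and~(b). From $f(x)\asymp g(|x|)$ with constant $c\geq 1$ one gets $f^{n\star}(x)\leq c^{2n}f^{n\star}(|x|\theta)$ for all $n$ (the convolution of radial functions is radial), and summing yields
\[
p_\lambda(x)\leq e^{(c^2-1)\lambda\|f\|_1}\,p_{c^2\lambda}(|x|\theta).
\]
Now \eqref{eq:char_conv} at parameter $c^2\lambda$ and $y=0$ gives $p_{c^2\lambda}(|x|\theta)\leq \tilde c\, f(|x|\theta)$ for $|x|$ large, and one more use of the profile returns $f(|x|\theta)\leq c^2 f(x)$. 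This radial transfer trick converts the purely \emph{directional} information in~(b) into the \emph{omnidirectional} bound required by~(c), without ever touching $K_f$ or the Kesten estimate.
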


We also give a similar result for the densities of more general infinitely divisible distributions on $\R^d$, $d \geq 1$, which are not pure compound Poisson.
Let $P_{\lambda}(dx)$, $\lambda>0$, be a measure on $\R^d$ which is (uniquely) determined by its Fourier transform 
\begin{align} \label{eq:id_measure}
\mathcal{F}(P_{\lambda})(\xi)=\int_{\R^d} e^{i \xi \cdot y}P_{\lambda}(dy)=\exp(- \lambda \psi(\xi)), \quad \xi \in \R^d,
\end{align}
where the exponent $\psi$ is given by the L\'evy--Khintchine formula
$$
  \psi(\xi) =  - i \xi \cdot b + \xi \cdot A \xi  + \int_{\R^d \setminus \left\{0\right\}} \left(1 - e^{i \xi \cdot y} + i \xi \cdot y \I_{B(0,1)}(y)\right)\nu(dy) , \quad \xi \in \R^d.
$$
Here $b \in \R^d$, $A$ is a symmetric non-negative definite $d \times d$ matrix and $\nu$ is a measure on $\R^d \setminus \left\{0\right\}$ such that $\int_{\R^d \setminus \left\{0\right\}} (1 \wedge |x|^2) \nu(dx) < \infty$ \cite{Sato}. We also denote 
$$
\phi(\xi) = \int_{\R^d \setminus \left\{0\right\}} \left(1 - e^{i \xi \cdot y} + i \xi \cdot y \I_{B(0,1)}(y)\right)\nu(dy), \quad \xi \in \R^d
$$
 for shorthand.

We have to impose some mild regularity assumptions on $A$ and $\nu$. We are able to proceed in three general, disjoint cases:

\smallskip
\noindent
(1) $\inf_{|\xi|=1} \xi \cdot A \xi > 0 $ and $\nu$ is a finite measure;

\smallskip
\noindent
(2) $\inf_{|\xi|=1} \xi \cdot A \xi > 0$, $\nu(\R^d \setminus \left\{0\right\})=\infty$ and there is $\lambda_0>0$ such that $\int_{\R^d} e^{-\lambda_0 \re \phi(\xi)} |\xi| d \xi < \infty$;

\smallskip
\noindent
(3) $A \equiv 0$, $\nu(\R^d \setminus \left\{0\right\})=\infty$ and there is $\lambda_0>0$ such that $\int_{\R^d} e^{-\lambda_0 \re \phi(\xi)} |\xi| d \xi < \infty$.

\smallskip

To shorten the notation below, it is convenient to define $\Lambda=(0,\infty)$ in case (1), and $\Lambda=[\lambda_0,\infty)$ in cases (2) and (3) above.
Under each of these three conditions, for every $\lambda \in \Lambda$, the measures $P_{\lambda}$ are absolutely continuous with respect to Lebesgue measure and the corresponding densities $p_{\lambda}$ are bounded and continuous on $\R^d$. Indeed, we always have $\int_{\R^d} e^{-\lambda \re \psi(\xi)} d \xi < \infty$ for $\lambda \in \Lambda$ and $p_{\lambda}$ can be expressed by the Fourier inversion formula. In case (2) the same is true for every $\lambda>0$, but due to singularity of the L\'evy measure $\nu$ near zero, we analyse the asymptotic behaviour of $p_{\lambda}$ for $\lambda \in \Lambda = [\lambda_0, \infty)$ only. Observe also that we do not need to consider separately the case when $A \equiv 0$, $b \neq 0$ and $\nu$ is a non-zero finite measure (the compound Poisson distribution with drift). The result for this case follows directly from Theorem \ref{th:poisson_characterization} as we only need to shift the argument of the density $p_{\lambda}$ defined in \eqref{eg:poiss_dens}. 

The next theorem for densities on $\R^d$, $d \geq 1$, can be seen as a directional variant of the result known from the theory of infinitely divisible distributions on the real line, see Sgibnev \cite{Sg}, Pakes \cite{Pa1, Pa2}, Shimura and Watanabe \cite{W05}, Watanabe \cite{Wat2}, and Watanabe and Yamamuro \cite{W10, WY}. A similar result has been obtained recently by Watanabe \cite{W20} for densities of infinitely divisible distributions on the half-line. 
Denote: $f_1 := f \wedge 1$.

\begin{theorem}\label{th:inf_div_character}
Let $\nu(dx)=f(x)dx$, where $f$ is an almost radial decreasing function such that \eqref{eq:c1} holds with some $\theta \in \mathds{S}^{d-1}$ and $\gamma \geq 0$.  Then, under each of the above three conditions \textup{(1)--(3)}, the following statements are equivalent.
\begin{itemize}
\item[(a)] $f_1 \in \mathscr{S}_d(\theta, \gamma)$;

\item[(b)] We have
\begin{align} \label{eq:char_exp_id}
\int_{\R^d} e^{\gamma(\theta \cdot y)} f_1(y) dy < \infty,
\end{align}
the expression 
$$
\psi(- i \gamma \theta) =  - \gamma(\theta \cdot b) - \gamma \theta \cdot A \gamma \theta + \int_{\R^d \setminus \left\{0\right\}} 
		\left(1-e^{\gamma(\theta \cdot y)}+\gamma(\theta \cdot y) \I_{B(0,1)}(y) \right) f(y)\, dy
$$ 
makes sense and for every $\lambda \in \Lambda$ and $y \in \R^d$,
\begin{align} \label{eq:char_conv_id}
\lim_{t \to \infty} \frac{p_{\lambda}(t\theta - y)}{\lambda f(t\theta)} = \exp{\left(\gamma(\theta \cdot y) - \lambda \psi(- i \gamma \theta)\right)} \quad \text{and} \quad \limsup_{|x| \to \infty} \frac{p_{\lambda}(x)}{f(x)} < \infty;
\end{align}

\item[(c)] For some (every) $\lambda \in \Lambda$, $p_{\lambda} \in \mathscr{S}_d(\theta, \gamma)$ and $\limsup_{|x| \to \infty} \frac{p_{\lambda}(x)}{f(x)} < \infty$.
\end{itemize}
\end{theorem}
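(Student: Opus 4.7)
The plan is to reduce Theorem \ref{th:inf_div_character} to the compound Poisson characterisation (Theorem \ref{th:poisson_characterization}) by factoring $P_\lambda$ via the L\'evy--Khintchine structure into a compound Poisson factor built directly on $f_1$ (which carries the heavy tail and is genuinely almost radially decreasing) and a ``light'' factor gathering the Gaussian, drift, and the bounded-support excess $(f-f_1)$, which will have an entire characteristic exponent and hence all exponential moments in the direction $\theta$.

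Concretely, I would decompose the L\'evy measure as $\nu = \mu + \mu'$, where $\mu(dy) = f_1(y)\,dy$ and $\mu'(dy) = (f-f_1)(y)\,dy = (f-1)_+(y)\,dy$ is supported on the bounded set $\{f>1\}$. Accordingly, $P_\lambda = P_\lambda^{f_1} * Q_\lambda$, where $P_\lambda^{f_1} = e^{-\lambda\|f_1\|_1}\delta_0 + p_\lambda^{f_1}(x)\,dx$ is the compound Poisson measure built on $f_1$, and $Q_\lambda$ is the infinitely divisible measure with exponent $\psi_Q := \psi - \psi_{f_1}$, where $\psi_{f_1}(\xi)=\int(1-e^{i\xi\cdot y})f_1(y)\,dy$. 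Under each of (1)--(3), the Fourier-integrability assumption transfers from $\psi$ to $\psi_Q$ (since $\re\psi_{f_1}$ is bounded by $2\|f_1\|_1$), so $Q_\lambda$ admits a bounded continuous density $q_\lambda$ by Fourier inversion, and
$$p_\lambda(x) = e^{-\lambda\|f_1\|_1} q_\lambda(x) + (p_\lambda^{f_1} * q_\lambda)(x).$$

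I would then apply Theorem \ref{th:poisson_characterization} directly to $P_\lambda^{f_1}$: the density $f_1 = f\wedge 1$ is almost radially decreasing (profile $g\wedge 1$ decreasing on $(0,\infty)$) and inherits \eqref{eq:c1} from $f$, and $f_1(t\theta)=f(t\theta)$ for $t$ large. This yields the equivalence of $f_1\in\mathscr{S}_d(\theta,\gamma)$ with the directional asymptotic
$$\frac{p_\lambda^{f_1}(t\theta - y)}{\lambda f(t\theta)} \longrightarrow e^{\gamma(\theta\cdot y)} \exp\!\left(\lambda \int \bigl(e^{\gamma(\theta\cdot z)}-1\bigr) f_1(z)\,dz\right)$$
together with $\sup_{|x|\geq 1} p_\lambda^{f_1}(x)/f_1(x)<\infty$. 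The L\'evy measure $\mu'$ of $Q_\lambda$ has compact support, so $\psi_Q$ extends to an entire function on $\mathds{C}^d$, whence $Q_\lambda$ has exponential moments of all orders and
$$M_\lambda(\theta) := \int_{\R^d} e^{\gamma(\theta\cdot z)} q_\lambda(z)\,dz = \exp\bigl(-\lambda \psi_Q(-i\gamma\theta)\bigr) < \infty,$$
while a Fourier-contour-shift gives the superexponential bound $q_\lambda(t\theta) = o(f(t\theta))$ along the ray. Dominated convergence in
$$(p_\lambda^{f_1} * q_\lambda)(t\theta - y) = p_\lambda^{f_1}(t\theta)\int \frac{p_\lambda^{f_1}(t\theta - (y+z))}{p_\lambda^{f_1}(t\theta)}\, q_\lambda(z)\,dz$$
then delivers $p_\lambda(t\theta - y) \sim \lambda f(t\theta)\,e^{\gamma(\theta\cdot y)}\,e^{-\lambda\psi(-i\gamma\theta)}$, using $\psi = \psi_{f_1} + \psi_Q$ and the identity $-\lambda\psi_{f_1}(-i\gamma\theta) = \lambda\int(e^{\gamma(\theta\cdot z)}-1)f_1(z)\,dz$. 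This establishes (a) $\Rightarrow$ (b); (b) $\Rightarrow$ (c) is immediate from the limiting formula, and (c) $\Rightarrow$ (a) is obtained by reversing the argument -- Theorem \ref{th:main} applied to $p_\lambda$ furnishes the $K$-map decay for $p_\lambda$, which transfers to $p_\lambda^{f_1}$ (and hence to $f_1$ via Corollary \ref{cor:characterization}) because the convolution with the light $q_\lambda$ cannot change the asymptotic behaviour.

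The principal obstacle is the uniform majorant required in the dominated-convergence step: the ratio $p_\lambda^{f_1}(t\theta - w)/p_\lambda^{f_1}(t\theta)$ should grow at worst like $e^{\gamma(\theta\cdot w)}$ (which is integrable against $q_\lambda$ by finiteness of $M_\lambda(\theta)$), but to have this uniformly in $t$ large and $w\in\R^d$ one must appeal precisely to the Kesten-type control supplied by Theorem \ref{th:main} / Corollary \ref{cor:characterization} applied to $p_\lambda^{f_1}$ itself. Without this input, the contribution of $z$ with $\theta\cdot z$ very negative -- where the ratio can be exponentially large -- would not be tameable, and this is the step where the new Theorem \ref{th:main} is genuinely indispensable. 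A secondary, routine issue is the identification of $\psi_Q(-i\gamma\theta)$ by analytic continuation: it reduces to the compact support of $\mu'$ and a standard Laplace-transform computation.
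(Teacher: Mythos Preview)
Your factorisation $P_\lambda = P_\lambda^{f_1} * Q_\lambda$, with $Q_\lambda$ carrying the Gaussian part, the drift, and the compactly supported excess $\mu'=(f-1)_+$, is a genuinely different route from the paper's. The paper does not reduce to the compound Poisson theorem; for (a)~$\Rightarrow$~(b) it first extracts $\lim_{r\to\infty}K_f(r)=0$ from Theorem~\ref{th:main} and then invokes the asymptotic results of \cite[Theorems~1--2]{KSz3} directly, while the $\limsup_{|x|\to\infty}p_\lambda(x)/f(x)<\infty$ bound is obtained from a small-jump/big-jump decomposition at the intrinsic scale $r=h(\lambda)$ together with the pointwise estimates of \cite[Lemmas~1,~3 and Corollary~3]{KSz3}. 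Your reduction to Theorem~\ref{th:poisson_characterization} is conceptually cleaner and more self-contained, but two points need more care than you give them.

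First, the implication (c)~$\Rightarrow$~(a) as you sketch it is circular. You write ``Theorem~\ref{th:main} applied to $p_\lambda$ furnishes the $K$-map decay for $p_\lambda$'', but Theorem~\ref{th:main} requires $p_\lambda$ to be comparable to a decreasing radial profile for large $|x|$, and (c) only gives the upper bound $\limsup p_\lambda/f<\infty$. One must \emph{first} establish $\liminf_{|x|\to\infty}p_\lambda(x)/f(x)>0$ from the structure of $P_\lambda$---the paper does this via a decomposition in the spirit of \cite[Lemma~2.6]{KSch}, and in your setup it would follow from $p_\lambda\geq\lambda e^{-\lambda\|f_1\|_1}(f_1*q_\lambda)$ together with $Q_\lambda(B(0,1/2))>0$ and \eqref{eq:c1}. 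Once $p_\lambda\asymp g(|x|)$ for $|x|\geq A$, the paper then applies Lemma~\ref{lem:comparable} \emph{directly} with $f_1$ and $p_\lambda$; your detour through $p_\lambda^{f_1}$ and ``transfer of $K$-map decay by deconvolution'' is unnecessary and, as written, would itself require $p_\lambda^{f_1}\asymp f_1$ at infinity, which is the conclusion of Theorem~\ref{th:poisson_characterization}(a)~$\Rightarrow$~(c) and hence presupposes~(a).

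Second, in case~(3) (no Gaussian component) your claim that ``a Fourier-contour-shift gives the superexponential bound $q_\lambda(t\theta)=o(f(t\theta))$'' is not automatic. Shifting $\xi\mapsto\xi+i\eta$ requires $\int_{\R^d} e^{-\lambda\,\re\psi_Q(\xi+i\eta)}\,d\xi<\infty$, and since $\re\psi_Q(\xi)$ may grow only like $|\xi|^\alpha$ with $\alpha<1$, a crude bound of the type $\re\psi_Q(\xi+i\eta)\geq(1-M|\eta|)\re\psi_Q(\xi)-C(\eta)$ limits $|\eta|<1/M$ with $M=\mathrm{diam}(\supp\mu')$, which need not exceed $\gamma$. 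The shift \emph{does} in fact work for every $\eta$, because the correction $\int |y|(1-\cos(\xi\cdot y))\,\mu'(dy)$ is $o(\re\phi_{\mu'}(\xi))$ as $|\xi|\to\infty$ (the mass of $1-\cos(\xi\cdot y)$ against the infinite measure $\mu'$ concentrates at $y\approx 0$), but this is a genuine estimate that your sketch omits. The paper sidesteps this entirely by working with the decomposition from \cite{KSz3}, where the needed pointwise bounds on the ``light'' factor are already available as lemmas.
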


In the theorem above we consider absolutely continuous L\'evy measures with almost radial decreasing densities. 
We refer the reader to \cite[Remark 3(iii)]{W20} for an interesting example of semistable infinitely divisible distribution on $[0,\infty)$ with absolutely continuous L\'evy measure. That distribution has subexponential density, while the density of the corresponding L\'evy measure -- restricted to $(1,\infty)$ and normalized -- is less regular. Moreover, it is proved in \cite{W20} that there are infinitely divisible distributions with subexponential densities on $[0,\infty)$ whose L\'evy measures are not absolutely continuous.

Our results cover the densities of the so-called isotropic unimodal distributions and integrable functions that are uniformly comparable to such densities. Recall that a probability measure $\mu$ on $\R^d$, $d \geq 1$, is called isotropic unimodal if $\mu(dx) = g(|x|)dx$ on $\R^d \setminus \left\{0\right\}$ and $(0,\infty) \ni r \mapsto g(r)$ is a decreasing function \cite[Definition 1.1]{Wat}. This class of probability measures is commonly used in various applications. Just recently, it has received much attention due to the remarkable progress in the potential theory of the isotropic unimodal L\'evy procesess, see Grzywny \cite{G}, Bogdan, Grzywny and Ryznar \cite{BGR1, BGR2}, Kulczycki and Ryznar \cite{KR, KR2}, Grzywny and Kwaśnicki \cite{GK} and references in these papers.

\begin{remark} \label{rem:iso_skip}
Under the assumption that the measures $P_{\lambda}$, $\lambda >0$, are isotropic (in particular, the densities $p_{\lambda}$ are radial functions)
the condition $\limsup_{|x| \to \infty} \frac{p_{\lambda}(x)}{f(x)} < \infty$ can be removed from part (b) of Theorem \ref{th:inf_div_character}. 
Indeed, in such a case this condition is an easy consequence of the convergence in the direction $\theta$ of \eqref{eq:char_conv_id}. 
This observation essentially simplifies and shortens the proof of the implication (a) $\Rightarrow$ (b) in Theorem \ref{th:inf_div_character} for isotropic measures.
\end{remark}

Radial decreasing and almost radial decreasing functions often appear in the literature concerning jump processes and related non-local operators, and their applications to partial differential equations, models of mathematical physics and spectral theory. Typically, they play the role of profiles and majorants for L\'evy kernels and related objects. The most recent progress includes equations involving convolution operators \cite{FT1, FT2}, heat kernels of unbounded pseudodifferential operators \cite{BGR,CGT,KSz2,KSz3,KM,Knop,KnopKul} and the corresponding Schr\"odinger operators \cite{BGJP,KL1,KSch}, structure of the spectrum and properties of eigenfunctions of non-local Schr\"odinger operators \cite{JW,KL2,KL3,KwaM}. Of course, this list of references is far from being complete. Note in passing that the condition \eqref{eq:K0} appeared first in \cite{KL2}, where it turned out to be a very useful tool in proving the fall-off rates for eigenfunctions of non-local Schr\"odinger operators. We expect that due to an increasing interest in models based on exponential functions, the classes $\mathscr{S}_d(\theta, \gamma)$ and our present results also will find applications in some of these areas. 

\smallskip

\textbf{Acknowledgements.} We thank Irmina Czarna, Mateusz Kwa\'snicki, Ren\'e Schilling, Paweł Sztonyk and Toshiro Watanabe for discussions, comments and references. We are grateful to the referee and the editors for careful handling of the paper and helpful comments.

\section{Proof of our main results} \label{sec:equivalence}

For given $f \in L^1(\R^d)$ and $\theta \in \mathds{S}^{d-1}$ we denote
\begin{align} \label{eq:G_theta}
G^f_{\theta}(t,r):= \int_{\substack{|t\theta-y| \textgreater r \\ | y| \textgreater r}}f(t\theta-y)f(y)dy, \quad t > 0, \ \ r >0.
\end{align}

We will need the following lemma. 

\begin{lemma} \label{lem:aux1}
Let $f \in L^1(\R^d)$, $f \geq 0$, be such that there exist $A \geq 1$ and a decreasing function $g:[A,\infty) \to (0,\infty)$ satisfying $f(x) \asymp g(|x|)$, $|x| \geq A$, and let \eqref{eq:c1} and \eqref{eq:c2} hold with some $\theta \in \mathds{S}^{d-1}$ and $\gamma \geq 0$. Then we have the following statements.
\begin{itemize}
\item[(a)] For every fixed $r \geq A$, 
$$
\lim_{t \to \infty} \frac{G^f_{\theta}(t,r)}{{f(t\theta)}} = 2 \int_{|y| \textgreater r} e^{\gamma(\theta \cdot y)}f(y)dy.
$$
\item[(b)]
For every fixed $t_0>1$, 
$$
\lim_{r \to \infty} \sup_{ t \in [1,t_0]} G^f_{\theta}(t,r) = 0.
$$
\end{itemize}
\end{lemma}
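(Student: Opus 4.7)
My plan is to algebraically decompose $G^f_\theta(t,r)$ so that the convolution $f^{2\star}(t\theta)$ appears, allowing \eqref{eq:c2} to be used for one piece and \eqref{eq:c1} for the other. Specifically, for $t>2r$ the condition $|t\theta-y|\leq r$ automatically forces $|y|\geq t-r>r$; splitting the full convolution integral into $\{|y|\leq r\}$ and $\{|y|>r\}$ and symmetrising on $\{|y|>r,\,|t\theta-y|\leq r\}$ via the change of variables $z=t\theta-y$ would yield the identity
\begin{equation*}
G^f_\theta(t,r) \,=\, f^{2\star}(t\theta) \,-\, 2\int_{|y|\leq r} f(t\theta-y)\,f(y)\,dy, \qquad t>2r.
\end{equation*}
After dividing by $f(t\theta)$, the first term tends to $2h(\theta)=2\int_{\R^d}e^{\gamma(\theta\cdot y)}f(y)\,dy$ by \eqref{eq:c2}; by \eqref{eq:c1} and dominated convergence the second converges to $2\int_{|y|\leq r}e^{\gamma(\theta\cdot y)}f(y)\,dy$; and subtracting gives the claimed limit $2\int_{|y|>r}e^{\gamma(\theta\cdot y)}f(y)\,dy$, which is finite thanks to $h(\theta)<\infty$.

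\textbf{The main obstacle} is supplying an integrable majorant for the dominated convergence step: one must uniformly control $f(t\theta-y)/f(t\theta)$ for $|y|\leq r$ and all large $t$. I would exploit the almost radial decreasing property: for $t\geq r+A$ and $|y|\leq r$ we have $|t\theta-y|\geq t-r\geq A$, whence $f(t\theta-y)\leq c\,g(t-r)$ and $f(t\theta)\geq c^{-1}g(t)$, reducing matters to a uniform bound on $g(t-r)/g(t)$. Such a bound is in turn produced by applying \eqref{eq:c1} at the single test point $y=r\theta$: the resulting limit $f((t-r)\theta)/f(t\theta)\to e^{\gamma r}$, combined once more with $f\asymp g(|\cdot|)$, forces $g(t-r)/g(t)\leq c^2(e^{\gamma r}+1)$ for $t$ sufficiently large, giving the required $t$-independent majorant $M_r\cdot f(y)$.

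\textbf{Plan for (b).} This part is softer. Since $g$ is decreasing and $\int_A^\infty g(s)\,s^{d-1}\,ds<\infty$ (by $f\in L^1$ and $f\asymp g(|\cdot|)$ on $\{|x|\geq A\}$), comparing $g(2r)\,r^d$ with $\int_r^{2r}g(s)\,s^{d-1}\,ds$ shows that $g(r)\to 0$ as $r\to\infty$. For $r>A$ and $|y|>r$ we have $f(y)\leq c\,g(|y|)\leq c\,g(r)$, hence, uniformly in $t$,
\begin{equation*}
G^f_\theta(t,r) \,\leq\, c\,g(r)\int_{\R^d} f(t\theta-y)\,dy \,=\, c\,g(r)\,\|f\|_1,
\end{equation*}
which yields $\sup_{t\in[1,t_0]}G^f_\theta(t,r)\to 0$ as $r\to\infty$, settling (b). I do not expect any difficulty here — the $t_0$-dependence never enters, and the uniformity is built into the bound.
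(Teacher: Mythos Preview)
Your argument is correct. For part~(a) it is essentially identical to the paper's: the paper derives, after a short chain of rearrangements and the same change of variables $y\mapsto t\theta-y$, precisely your identity
\[
\frac{G^f_\theta(t,r)}{f(t\theta)} \;=\; \frac{f^{2\star}(t\theta)}{f(t\theta)} \;-\; 2\int_{|y|\leq r}\frac{f(t\theta-y)}{f(t\theta)}\,f(y)\,dy\qquad (t>2r),
\]
and then justifies the dominated convergence step for the second integral in exactly the way you describe (reduce to $g(t-r)/g(t)$ via the profile, and bound this ratio using \eqref{eq:c1} at the single point $y=r\theta$).

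For part~(b), your route is genuinely different and more elementary. The paper observes that $G^f_\theta(t,r)=f_r^{2\star}(t\theta)$ with $f_r:=f\,\mathds{1}_{\{|y|>r\}}\in L^1\cap L^\infty$, deduces continuity of $t\mapsto G^f_\theta(t,r)$ on $[1,t_0]$, notes pointwise monotone decrease to $0$ in $r$, and invokes Dini's theorem to upgrade to uniform convergence. Your bound $G^f_\theta(t,r)\leq c\,g(r)\,\|f\|_1$ (valid for $r>A$, using $f(y)\leq c\,g(|y|)\leq c\,g(r)$ on $\{|y|>r\}$) bypasses all of this and in fact yields uniformity in $t$ over all of $(0,\infty)$, not just over $[1,t_0]$. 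The paper's route is more structural but needs the continuity step; yours is shorter and uses nothing beyond the decreasing profile and integrability, at the cost of being slightly less self-contained about why $g(r)\to 0$ (your tail-comparison argument for this is fine).
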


\begin{proof}
We first show (a). Fix $r \geq A$ and suppose that $t > 2r$. We have
\begin{align*}
\frac{G^f_{\theta}(t,r)}{{f(t\theta)}} & = \left(\int_{\R^d} \frac{f(t\theta  -y)}{f(t\theta )}f(y)dy - 2 \int_{\R^d} e^{\gamma (\theta \cdot y)} f(y) dy \right) \\ 
& \ \ + 2  \int_{|y| \leqslant r} e^{\gamma (\theta \cdot y)} f(y) dy - \left( \int_{|t\theta - y| \leqslant r}   \frac{f(t\theta  -y)}{f(t\theta )}f(y)dy + \int_{|y| \leqslant r} \frac{f(t\theta  -y)}{f(t\theta )}f(y)dy    \right)   \\ 
& \ \ + 2  \int_{|y| \textgreater r} e^{\gamma (\theta \cdot y)} f(y) dy,
\end{align*}
which leads to
\begin{align*}
\frac{G^f_{\theta}(t,r)}{{f(t\theta)}} & -  2  \int_{|y| \textgreater r} e^{\gamma (\theta \cdot y)} f(y) dy \\
& = \frac{f^{2 \star}(t\theta)}{f(t\theta )} - 2 h(\theta) \\ 
& \ \ + 2  \int_{|y| \leqslant r} e^{\gamma (\theta \cdot y)} f(y) dy - \left( \int_{|t\theta - y| \leqslant r}   \frac{f(t\theta  -y)}{f(t\theta )}f(y)dy + \int_{|y| \leqslant r} \frac{f(t\theta  -y)}{f(t\theta )}f(y)dy    \right).
\end{align*}
By changing variables $y = t \theta - z$ in the second integral in the last line, we get
\begin{align*}
     \frac{G^f_{\theta}(t,r)}{{f(t\theta)}} -  2 & \int_{|y| \textgreater r} e^{\gamma (\theta \cdot y)} f(y) dy \\
		 & = \left( \frac{f^{2 \star}(t \theta)}{f(t \theta)} -  2 h(\theta)\right) + 2  \int_{|y| \leqslant r} \left( e^{\gamma (\theta \cdot y)} - \frac{f(t\theta  -y)}{f(t\theta )} \right) f(y)  dy.
\end{align*}
Due to assumption \eqref{eq:c2} the expression inside the first bracket on the right hand side above converges to zero as $t \to \infty$. The last integral also goes to zero as $t \to \infty$. This is a consequence of \eqref{eq:c1} and the Lebesgue dominated convergence theorem. As $f \in L^1(\R^d)$, it is enough to show that the function 
$$
e^{\gamma (\theta \cdot y)} - \frac{f(t\theta  -y)}{f(\theta t)}
$$ 
is uniformly bounded for $|y|<r$ and sufficiently large $t$'s. Recall that there exist a constant $c \geq 1$ and a decreasing function 
 $g:[A,\infty) \to (0,\infty)$ such that 
\begin{align} \label{eq:profile}
\frac{1}{c} g(|x|) \leq f(x) \leq c g(|x|), \quad |x| \geq A.
\end{align}
Since $t > 2r$ and $|y| < r$ (in particular, $|t \theta| = t \geq A$ and $|t\theta - y| \geq t-r \geq A$), we have
$$
\left|e^{\gamma (\theta \cdot y)} - \frac{f(t\theta  -y)}{f(t\theta )} \right| \leq e^{\gamma r} + c^2 \frac{g(t-r)}{g(t)}.
$$
Let $x_s=s \theta$, $s > 0$. Then $|x_t-x_r| = t-r$, $|x_t|=t$ and, in consequence,
$$
\frac{g(t-r)}{g(t)} = \frac{g(|x_t-x_r|)}{g(|x_t|)} \leq c^2 \frac{f(x_t-x_r)}{f(x_t)}.
$$
By \eqref{eq:c1}, there exists $t_0 > 2r$ such that for $t>t_0$
$$
\frac{f(x_t-x_r)}{f(x_t)} \leq 2e^{\gamma r}.
$$
Hence
$$
\left|e^{\gamma (\theta \cdot y)} - \frac{f(t\theta  -y)}{f(t\theta )} \right| \leq (1+2c^4) e^{\gamma r}, \quad |y| < r, \ \ t > t_0.
$$
This completes the proof of (a).

We now show (b). 
Fix $t_0 > 1$ and observe that for every fixed $r \geq A$ the function $[1,t_0] \ni t \longmapsto G^f_{\theta}(t,r)$ is continuous. This is a consequence of the fact that 
$$
G^f_{\theta}(t,r) = f_r^{2 \star}(t\theta), \quad \text{with} \ \ f_r \in L^1(\R^d) \cap L^{\infty}(\R^d),
$$
where $f_r := f \I_{\left\{|y|>r\right\}}$, see \cite[Theorem 14.8 (ii)]{Sch} (the boundedness of $f_r$ follows from the fact that $f(x)$ has a profile $g(|x|)$ for $|x| \geq A$). Moreover, the function $r \longmapsto G^f_{\theta}(t,r)$ is decreasing and $G^f_{\theta}(t,r) \stackrel{r \to \infty}{\longrightarrow} 0$, for every fixed $t \in [1,t_0]$. It then follows from Dini's theorem that $\lim_{r \to \infty} \sup_{ t \in [1,t_0]} G^f_{\theta}(t,r) = 0$, which completes the proof. 
\end{proof}

We are now in a position to give a proof of our first main result.

\begin{proof}[Proof of Theorem \ref{th:main}]
Let $|x|, r \geq A$. Recall that there exists a decreasing profile $g$ such that \eqref{eq:profile} holds. 
With this we may write
\begin{align*}
\int_{\substack{|x-y| \textgreater r \\ | y| \textgreater r}}\frac{f(x-y)}{f(x)}f(y)dy & \leqslant c^3 \int_{\substack{|x-y| \textgreater r \\ | y| \textgreater r}}\frac{g(|x-y|)}{g(|x|)}g(|y|)dy \\ & = c^3\int_{\substack{\big||x|\theta -y\big| \textgreater r \\ | y| \textgreater r}}\frac{g\big(\big||x|\theta-y\big|\big)}{g(|x|\theta)}g(|y|)dy.
\end{align*}
The last equality follows from the fact that the convolution of two radial functions is again a radial function. By using the comparability of $f$ and $g$ once again, we obtain

$$
\int_{\substack{|x-y| \textgreater r \\ | y| \textgreater r}}\frac{f(x-y)}{f(x)}f(y)dy 
\leqslant c^6 \int_{\substack{\big||x|\theta -y\big| \textgreater r \\ | y| \textgreater r}}\frac{f(|x|\theta-y)}{f(|x|\theta)}f(y)dy.
$$
The function on the right hand side of the above inequality depends on $x$ only via $|x|$. Hence

\begin{equation} \label{eq:Kr_est}
    K^A_f(r) \leqslant c^6 \sup\limits_{t \geq A } \int_{\substack{|t\theta-y| \textgreater r \\ | y| \textgreater r}}\frac{f(t\theta-y)}{f(t\theta)}f(y)dy 
		            = c^6 \sup\limits_{t \geq A } \frac{G^f_{\theta}(t,r)}{{f(t\theta)}}.
\end{equation}
Fix $\varepsilon >0$. By the first part of \eqref{eq:c2}, there exists $r_0 \geq A$ such that 
\begin{equation*}
    0 \leqslant \int_{|y| \textgreater r} e^{\gamma (\theta \cdot y)} f(y) dy\leqslant \frac{\varepsilon}{4c^6}, \quad \text{for} \quad r \geqslant r_0.
\end{equation*}
Moreover, by Lemma \ref{lem:aux1}(a) we can find $t_0>A$ such that for every $t \geq t_0$
$$
\left|\frac{G^f_{\theta}(t,r_0)}{{f(t\theta)}} - 2 \int_{|y| \textgreater r_0} e^{\gamma(\theta \cdot y)}f(y)dy\right| \leq \frac{\varepsilon}{2c^6}.
$$
Using the monotonicity of the function $r \longmapsto G(t,r)$ and these estimates, we get
\begin{align*}
    0 \leqslant \frac{G^f_{\theta}(t,r)}{{f(t\theta)}} & \leqslant \frac{G^f_{\theta}(t,r_0)}{{f(t\theta)}} \\ & = \frac{G^f_{\theta}(t,r_0)}{{f(t\theta)}} - 2 \int_{|y| \textgreater r_0} e^{\gamma(\theta \cdot y)}f(y)dy + 2 \int_{|y| \textgreater r_0} e^{\gamma(\theta \cdot y)}f(y)dy \\
		 & \leqslant  \frac{\varepsilon}{c^6},
\end{align*}
for every $t \geq t_0$ and $r \geq r_0$. This means that 
\begin{equation*}
    \sup\limits_{t \geqslant t_0 }\frac{G^f_{\theta}(t,r)}{{f(t\theta)}}  \leqslant \frac{\varepsilon}{c^6}, \quad \text{for} \quad r \geqslant r_0.
\end{equation*}
On the other hand, it follows from Lemma \ref{lem:aux1}(b) and \eqref{eq:profile} that 
\begin{equation*}
    0 \leq \sup\limits_{t \in [A, t_0]}\frac{G^f_{\theta}(t,r)}{{f(t\theta)}} \leq \frac{c}{{g(t_0)}} \sup\limits_{t \in [1, t_0]}G^f_{\theta}(t,r)  \stackrel{r \to \infty}{\longrightarrow} 0.
\end{equation*}
By \eqref{eq:Kr_est} we may then conclude that
$$
\limsup_{r \to \infty} K^A_f(r) \leq c^6\left(\limsup_{r \to \infty} \sup\limits_{t \in [A, t_0]}\frac{G^f_{\theta}(t,r)}{{f(t\theta)}} +  \limsup_{r \to \infty} \sup\limits_{t  \geq t_0}\frac{G^f_{\theta}(t,r)}{{f(t\theta)}}\right) \leq \varepsilon.
$$
This completes the proof. 
\end{proof}

\section{Applications}

\subsection{Direct applications}
We now give the comparability result which is a multivariate directional version of \cite[Lemma 1.2]{Klup}. We first prove a lemma which is important for our further applications. 

\begin{lemma}\label{lem:comparable}
Let $f_1, f_2 \in L^1(\R^d)$ be such that $f_1, f_2 \geq 0$ and satisfy the following conditions:
\begin{itemize}
\item[(a)] $f_1$ is almost radial decreasing density;
\item[(b)] there exist $A \geq 1$ and a decreasing function $g_2:[A,\infty) \to (0,\infty)$ such that $f_2(x) \asymp g_2(|x|)$, $|x| \geq A$;
\item[(c)] both densities $f_1$ and $f_2$ satisfy the condition \eqref{eq:c1} with some $\theta \in \mathds{S}^{d-1}$ and $\gamma \geq 0$ and
\begin{align} \label{eg:comp_fs}
0 < \liminf_{|x| \to \infty} \frac{f_1(x)}{f_2(x)} \leq \limsup_{|x| \to \infty} \frac{f_1(x)}{f_2(x)} < \infty.
\end{align}
\end{itemize}
If $f_2 \in \mathscr{S}_d(\theta, \gamma)$, then $f_1 \in \mathscr{S}_d(\theta, \gamma)$.
\end{lemma}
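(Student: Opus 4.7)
The plan is to verify \eqref{eq:c2} for $f_1$ directly; note that \eqref{eq:c1} for $f_1$ is already in the hypotheses, and the almost radial decreasing property of $f_1$ guarantees $f_1(t\theta)>0$ for every $t>0$. First I would show $h_1(\theta):=\int_{\R^d} e^{\gamma(\theta\cdot y)}f_1(y)\,dy<\infty$ by picking $R_0$ so large that \eqref{eg:comp_fs} provides $f_1\leq c f_2$ on $\{|y|>R_0\}$ for some $c\geq 1$; the contribution from $|y|\leq R_0$ is bounded by $e^{\gamma R_0}\|f_1\|_1$, and that from $|y|>R_0$ by $c\,h_2(\theta)<\infty$ (here $h_2(\theta)<\infty$ because $f_2\in\mathscr{S}_d(\theta,\gamma)$).

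Next, I would fix $r\geq R_0$ and exploit the fact that, for $t>2r$, the sets $\{|y|\leq r\}$ and $\{|t\theta-y|\leq r\}$ are disjoint and, by the symmetry $y\mapsto t\theta-y$, contribute equally to the convolution integral. This yields the decomposition
$$\frac{f_1^{2\star}(t\theta)}{f_1(t\theta)}=2\int_{|y|\leq r}\frac{f_1(t\theta-y)}{f_1(t\theta)}f_1(y)\,dy+\frac{G^{f_1}_{\theta}(t,r)}{f_1(t\theta)}.$$
For the ``near'' integral, dominated convergence based on \eqref{eq:c1} (with the uniform bound on $f_1(t\theta-y)/f_1(t\theta)$ for $|y|\leq r$ and large $t$ produced exactly as in the proof of Lemma \ref{lem:aux1}(a), using only the profile of $f_1$ and \eqref{eq:c1}) gives the limit $\int_{|y|\leq r}e^{\gamma(\theta\cdot y)}f_1(y)\,dy$ as $t\to\infty$. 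For the ``far'' term, the comparability \eqref{eg:comp_fs} applies because $r,t\geq R_0$ forces $|y|,|t\theta-y|,|t\theta|\geq R_0$, hence
$$\frac{G^{f_1}_{\theta}(t,r)}{f_1(t\theta)}\leq c^3\,\frac{G^{f_2}_{\theta}(t,r)}{f_2(t\theta)},$$
and Lemma \ref{lem:aux1}(a) applied to $f_2$ (which satisfies both \eqref{eq:c1} and \eqref{eq:c2}) shows $G^{f_2}_{\theta}(t,r)/f_2(t\theta)\to 2\int_{|y|>r}e^{\gamma(\theta\cdot y)}f_2(y)\,dy$.

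Finally, taking $\limsup_{t\to\infty}$ of the decomposition and then $r\to\infty$, the tail $\int_{|y|>r}e^{\gamma(\theta\cdot y)}f_2(y)\,dy$ vanishes because $h_2(\theta)<\infty$, yielding $\limsup_{t\to\infty} f_1^{2\star}(t\theta)/f_1(t\theta)\leq 2h_1(\theta)$. The matching lower bound $\liminf_{t\to\infty}\geq 2h_1(\theta)$ is obtained by discarding the non-negative far term and passing to the limit $r\to\infty$ via monotone convergence. The main technical point is the uniform bound needed for dominated convergence on the near integral; however, since this step only uses the profile of $f_1$ and the convergence \eqref{eq:c1}, not \eqref{eq:c2} (which is precisely what we are trying to prove), it is already covered verbatim by the argument in the proof of Lemma \ref{lem:aux1}(a), and the rest of the proof is bookkeeping.
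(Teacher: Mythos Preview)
Your argument is correct and takes a genuinely different route from the paper. The paper proves the lemma by passing through the $K_f$-characterization: from $f_2\in\mathscr{S}_d(\theta,\gamma)$ it first deduces $\lim_{r\to\infty}K^A_{f_2}(r)=0$ via Theorem~\ref{th:main}, transfers this to $K^A_{f_1}$ using the comparability $K^A_{f_1}\asymp K^A_{f_2}$ (which follows from \eqref{eg:comp_fs} and the profiles), handles the range $1\leq|x|\leq A$ by a direct bound to obtain $\lim_{r\to\infty}K_{f_1}(r)=0$, and finally invokes Proposition~\ref{prop:sufficient} to conclude $f_1\in\mathscr{S}_d(\theta,\gamma)$. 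You instead verify \eqref{eq:c2} for $f_1$ by hand, splitting $f_1^{2\star}(t\theta)/f_1(t\theta)$ into a near piece (controlled by dominated convergence, using only the profile of $f_1$ and \eqref{eq:c1}) and a far piece (dominated by $c^3 G^{f_2}_\theta(t,r)/f_2(t\theta)$ and then handled by Lemma~\ref{lem:aux1}(a) applied to $f_2$). Your proof is more self-contained in that it avoids both Theorem~\ref{th:main} and Proposition~\ref{prop:sufficient}, relying only on Lemma~\ref{lem:aux1}(a); the paper's route is shorter once that machinery is available and illustrates the main theme of the article (the equivalence with $K_f(r)\to 0$). One cosmetic point: take $R_0\geq A$ so that Lemma~\ref{lem:aux1}(a) is applicable to $f_2$ for every $r\geq R_0$; this costs nothing.
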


\begin{proof}
By the second part of (c), there exists a constant $c \geq 1$ and $r \geq A$ such that
$$
\frac{1}{c} \leq \frac{f_1(x)}{f_2(x)} \leq c, \quad |x| \geq r,
$$
As the densities $f_1$ and $f_2$ have the profiles $g_1$ and $g_2$ which are positive and decreasing, this means that there exists a constant $c_1 \geq c$ such that  
$$
\frac{1}{c_1} \leq \frac{f_1(x)}{f_2(x)} \leq c_1, \quad |x| \geq A,
$$
which implies the comparability $K^A_{f_1}(r) \asymp K^A_{f_2}(r)$, $r \geq A$. Consequently, $\lim_{r \to \infty} K^A_{f_1}(r) = 0$ if and only if $\lim_{r \to \infty} K^A_{f_2}(r) = 0$. 

Since $f_2 \in \mathscr{S}_d(\theta, \gamma)$, we get $\lim_{r \to \infty} K^A_{f_2}(r) = 0$, by Theorem \ref{th:main}. The above observation immediately gives $\lim_{r \to \infty} K^A_{f_1}(r) = 0$. Moreover, for $1 \leq |x| \leq A \leq r$ we have 
$$\int_{|x-y|>r \atop |y| >r} \frac{f_1(x-y)f_1(y)}{f_1(x)} dy  \leq c_2 \int_{|x-y|>r \atop |y| >r} \frac{g_1(|x-y|)g_1(|y|)}{g_1(|x|)} dy \leq c_2 \int_{|y| >r} g_1(|y|) dy,
$$ 
which yields $\lim_{r \to \infty} K_{f_1}(r) = 0$. The assertion follows now directly from Proposition \ref{prop:sufficient}.
\end{proof}

The next proposition is a corollary from Lemma \ref{lem:comparable}

\begin{proposition}\label{prop:comparable}
Let $f_1, f_2 \in L^1(\R^d)$, $f_1, f_2 \geq 0$, be the almost radial decreasing densities such that \eqref{eg:comp_fs} holds.
Assume, in addition, that both $f_1$ and $f_2$ satisfy condition \eqref{eq:c1} with some $\theta \in \mathds{S}^{d-1}$ and $\gamma \geq 0$. Then $f_1 \in \mathscr{S}_d(\theta, \gamma)$ if and only if $f_2 \in \mathscr{S}_d(\theta, \gamma)$.
\end{proposition}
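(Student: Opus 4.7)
The plan is simply to invoke Lemma \ref{lem:comparable} twice, exploiting the symmetry of the hypotheses under the swap $f_1 \leftrightarrow f_2$. The proposition is a one-line corollary of that lemma once this symmetry is observed.

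First I would verify that the three hypotheses (a), (b), (c) of Lemma \ref{lem:comparable} are fulfilled by the pair $(f_1, f_2)$ under the assumptions of the proposition. Hypothesis (a) is immediate since $f_1$ is almost radial decreasing by assumption. Hypothesis (b) follows because $f_2$ is also almost radial decreasing: its profile $g_2$ is a decreasing positive function on $(0,\infty)$, which in particular gives a decreasing $g_2: [A,\infty) \to (0,\infty)$ with $f_2 \asymp g_2$ for any choice of $A \geq 1$. Hypothesis (c) is exactly the combination of \eqref{eq:c1} for both densities and \eqref{eg:comp_fs}. Hence Lemma \ref{lem:comparable} yields the implication $f_2 \in \mathscr{S}_d(\theta,\gamma) \Rightarrow f_1 \in \mathscr{S}_d(\theta,\gamma)$.

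For the reverse implication, the key observation is that \eqref{eg:comp_fs} is symmetric under the interchange of $f_1$ and $f_2$: indeed,
\[
0 < \liminf_{|x| \to \infty} \frac{f_1(x)}{f_2(x)} \le \limsup_{|x| \to \infty} \frac{f_1(x)}{f_2(x)} < \infty
\]
is equivalent to the analogous inequality with $f_1$ and $f_2$ swapped. Since $f_2$ is almost radial decreasing, hypothesis (a) of Lemma \ref{lem:comparable} holds with $f_2$ in place of $f_1$; since $f_1$ is almost radial decreasing, hypothesis (b) holds with $f_1$ in place of $f_2$; and hypothesis (c) is symmetric. Applying Lemma \ref{lem:comparable} with the roles of $f_1$ and $f_2$ exchanged therefore gives $f_1 \in \mathscr{S}_d(\theta,\gamma) \Rightarrow f_2 \in \mathscr{S}_d(\theta,\gamma)$.

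There is no genuine obstacle here: the real content is in Lemma \ref{lem:comparable} (which itself rests on Theorem \ref{th:main} and Proposition \ref{prop:sufficient}), and the proposition only records the symmetric statement. The only point worth flagging is to note explicitly that \eqref{eg:comp_fs} is invariant under $f_1 \leftrightarrow f_2$, so that no additional assumption is needed to run the argument in both directions.
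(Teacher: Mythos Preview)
Your proposal is correct and matches the paper's approach: the paper simply states that the proposition is a corollary of Lemma~\ref{lem:comparable}, and your argument spells out exactly this, applying the lemma once in each direction using the symmetry of the hypotheses under $f_1 \leftrightarrow f_2$.
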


\subsection{Compound Poisson and more general infinitely divisible distributions}

We are now ready to prove our next theorems. 

\begin{proof}[Proof of Theorem \ref{th:poisson_characterization}]
\noindent
(a) $\Rightarrow$ (b):
First note that \eqref{eq:char_exp} holds by definition of $\mathscr{S}_d(\theta, \gamma)$. By Theorem \ref{th:main} we have that $K_f(r) \to 0$ as $r \to \infty$. The convergence in (b) follows then directly from \cite[Lemma 7(a)]{KSz3}. The assumptions (B), (C) in \cite{KSz3} are satisfied as we know that our density $f$ is almost radial decreasing. We use this opportunity to correct the typo in the condition (3) of the assumption (B). One should have $\nu(\left\{x:|x|>r/2\right\})$ instead of $\nu(\left\{x:|x|>r\right\})$ there.

\bigskip

\noindent
(b) $\Rightarrow$ (c):
Fix $\lambda>0$. By \eqref{eq:char_conv}, we have
$$
\lim_{t \to \infty} \frac{p_{\lambda}(t\theta - y)}{p_{\lambda}(t\theta)} 
   = \lim_{t \to \infty} \frac{p_{\lambda}(t\theta - y)}{\lambda f(t\theta)} \frac{\lambda f(t\theta)}{p_{\lambda}(t\theta)} = e^{\gamma(\theta \cdot y)}, \quad y \in \R^d,
$$
which shows that $p_{\lambda}$ satisfies \eqref{eq:c1}. Since $f$ is almost radial decreasing, there exist a decreasing profile function $g:(0,\infty) \to (0,\infty)$ and a constant $c \geq 1$ such that 
\begin{align}\label{eq:poiss_prof}
\frac{1}{c} g(|x|) \leq f(x) \leq c g(|x|), \quad x \in \R^d \setminus \left\{0\right\}.
\end{align}
In particular, for $n \in \N$, 
$$
f^{n \star}(x) \leq c^{2n} f^{n \star}(|x|\theta ), \quad x \in \R^d \setminus \left\{0\right\},
$$
which leads to estimate
\begin{align*}
p_{\lambda}(x) & = e^{-\lambda\left\|f\right\|_1} \sum_{n=1}^{\infty} \frac{\lambda^n}{n!} f^{n \star}(x) \\
               & \leq e^{(c^2-1)\lambda\left\|f\right\|_1} e^{-c^2\lambda\left\|f\right\|_1} \sum_{n=1}^{\infty} \frac{(c^2\lambda)^n}{n!} f^{n \star}(|x|\theta) \\
							 & \leq e^{(c^2-1)\lambda\left\|f\right\|_1} p_{c^2\lambda}(|x|\theta).
\end{align*}
Now, by \eqref{eq:char_conv} with $y = 0$ and \eqref{eq:poiss_prof}, there is a constant $\widetilde c >0$ such that for sufficiently large $|x|$, we get
$$
 p_{c^2\lambda}(|x|\theta) \leq \widetilde c f(|x|\theta) \leq c^2 \widetilde c f(x).
$$
This shows that $\limsup_{|x| \to \infty} \frac{p_{\lambda}(x)}{f(x)} < \infty$ and we are left to show that $p_{\lambda}$ satisfies \eqref{eq:c2}.
The first part of this condition follows from Tonelli's theorem and the fact that $\int e^{\gamma(\theta \cdot y)} f^{n \star}(y)dy = \left(\int e^{\gamma(\theta \cdot y)} f(y)dy\right)^n$, $n \in \N$. Indeed, we have
\begin{align}\label{eq:exp_moment}
\int_{\R^d} e^{\gamma(\theta \cdot y)} p_{\lambda}(y) dy & = e^{-\lambda\left\|f\right\|_1} \sum_{n=1}^{\infty} \frac{\lambda^n}{n!} \int_{\R^d} e^{\gamma(\theta \cdot y)}f^{n \star}(y) dy \nonumber\\
                                                       & = e^{-\lambda\left\|f\right\|_1} \sum_{n=1}^{\infty} \frac{1}{n!} \left(\lambda \int_{\R^d} e^{\gamma(\theta \cdot y)}f(y) dy\right)^n \\
																										 	 & = e^{-\lambda\left\|f\right\|_1} \left(e^{\lambda \int e^{\gamma(\theta \cdot y)} f(y)dy} - 1\right)< \infty. \nonumber
\end{align}
To see the second part of \eqref{eq:c2}, we first notice that by direct calculation we get
 $p_{\lambda}^{2 \star}(x) = p_{2\lambda}(x) - 2e^{-\lambda\left\|f\right\|_1} p_{\lambda}(x)$. Hence,
$$
\frac{p_{\lambda}^{2 \star}(t\theta)}{p_{\lambda}(t\theta)} = 2 \cdot \frac{p_{\lambda}^{2 \star}(t\theta)}{2\lambda f(t\theta)}\frac{\lambda f(t\theta)}{p_{\lambda}(t\theta)} = 2 \cdot \left(\frac{p_{2\lambda}(t\theta)}{2\lambda f(t\theta)}\frac{\lambda f(t\theta)}{p_{\lambda}(t\theta)}- e^{-\lambda \left\|f\right\|_1} \right).
$$
It then follows from \eqref{eq:char_conv} with $y=0$ that
$$
\lim_{t \to \infty} \frac{p_{\lambda}^{2 \star}(t\theta)}{p_{\lambda}(t\theta)} = 2 \left(\exp{\left(\lambda \int \left(e^{\gamma(\theta \cdot y)} -1 \right)f(y)dy\right)} - \exp{(-\lambda \left\|f\right\|_1}) \right).
$$
Using \eqref{eq:exp_moment}, we conclude that
$$
\lim_{t \to \infty} \frac{p_{\lambda}^{2 \star}(t\theta)}{p_{\lambda}(t\theta)} = 2 \int_{\R^d} e^{\gamma(\theta \cdot y)} p_{\lambda}(y) dy,
$$
which completes the proof of part (c).

\bigskip

\noindent
(c) $\Rightarrow$ (a):
By definition of $p_{\lambda}$ and the second part of (c), there exist $A \geq 1$ and a constant $c_1>0$ such that
$$
\lambda e^{-\lambda\left\|f\right\|_1} f(x) \leq p_{\lambda}(x) \leq c_1 f(x), \quad |x| \geq A.
$$
Since $f$ is almost radial decreasing, this means that
$$
p_{\lambda}(x) \asymp g(|x|), \quad |x| \geq A.
$$
Thus it follows from Lemma \ref{lem:comparable} that $f \in \mathscr{S}_d(\theta, \gamma)$, giving (a). 
This completes the proof.

\end{proof}

\begin{proof}[Proof of Theorem \ref{th:inf_div_character}]
\noindent
(a) $\Rightarrow$ (b):
As before, \eqref{eq:char_exp_id} holds by definition of $\mathscr{S}_d(\theta, \gamma)$, and we have $K_{f_1}(r) \to 0$ as $r \to \infty$, by Theorem \ref{th:main}. Moreover, for sufficiently large $r>0$,
$$
\int_{|x-y|>r \atop |y| >r} \frac{f(x-y)f(y)}{f(x)} dy = \int_{|x-y|>r \atop |y| >r} \frac{f_1(x-y)f_1(y)}{f(x)} dy \leq \int_{|x-y|>r \atop |y| >r} \frac{f_1(x-y)f_1(y)}{f_1(x)} dy,
$$
which means that $\lim_{r \to \infty} K_{f}(r) \to 0$ as well.  
The convergence part in \eqref{eq:char_conv_id} follows then from \cite[Theorem 2(a)]{KSz3} in case (1), and from \cite[Theorem 1(a)]{KSz3} in cases (2)--(3) (the rest of the proof of this implication is mainly based on \cite{KSz3} -- in order to avoid repetitions, we just refer precisely to various parts of that paper). As we already explained in the previous proof, assumptions (B) and (C) in the quoted paper are satisfied, and assumption (A) is assumed here a priori. Moreover, the second cited result can be applied to $T= \left\{\lambda\right\}$, for an arbitrary $\lambda \in \Lambda$. Indeed, under the condition $\int_{\R^d} e^{-\lambda_0 \re \phi(\xi)} |\xi| d \xi < \infty$ the assumption (D) in that paper holds true for all such sets $T$, see \cite[Remark 1(e)]{KSz3}. Moreover, the expression $\psi(- i \gamma \theta)$ makes sense by \cite[Theorem 25.17]{Sato} -- this is a consequence of \eqref{eq:char_exp_id}. We only have to show that $\limsup_{|x| \to \infty} \frac{p_{\lambda}(x)}{f(x)} < \infty$ for every fixed $\lambda \in \Lambda$. This is the most technical part of this proof. Recall that if the measures $P_{\lambda}$ are isotropic, it can be omitted, see Remark \ref{rem:iso_skip}.

Consider the cases (2) and (3). We fix $\lambda \in \Lambda$ and first introduce a necessary notation. 
Let $\accentset{\circ}{P}^r_{\lambda}$ and $\bar{P}^r_{\lambda}$, $\lambda, r>0$, be the probability measures given by
$$
\mathcal{F}(\accentset{\circ}{P}^r_{\lambda})(\xi) = \exp\left(-\lambda \int_{B_r(0) \setminus \left\{0\right\}} \left(1 - e^{i \xi \cdot y} + i \xi \cdot y \right)f(y)dy\right), \quad \xi \in \R^d,
$$
$$
\mathcal{F}(\bar{P}^r_{\lambda})(\xi)=\exp\left(-\lambda \int_{B_r(0)^c} \left(1 - e^{i \xi \cdot y}\right)f(y)dy\right), \quad \xi \in \R^d.
$$
Here $B_r(x):= \left\{y \in \R^d: |y-x| < r\right\}$. For any $r>0$ we have 
$$
\accentset{\circ}{P}^r_{\lambda}(dx) = \accentset{\circ}{p}^r_{\lambda}(x)dx \quad \text{with} \quad \accentset{\circ}{p}^r_{\lambda} \in C^1_b(\R^d)
$$
and 
$$
\bar{P}^r_{\lambda}(dx) =  e^{-\lambda \left\|\bar{f_r}\right\|_1}\delta_{0}(dx) + {\bar{p}}^r_{\lambda}(x) dx 
\quad \text{with} \quad
\bar{p}^r_{\lambda}(x) := e^{-\lambda \left\|\bar{f_r}\right\|_1} \sum_{n=1}^\infty \frac{\lambda^n\bar{f_r}^{n*}(x)}{n!},
$$
where $\bar{f_r}(y)= \I_{B_r(0)^c}(y)\,f(y).$ Also, whenever $A \neq 0$, we denote by $G_{\lambda}(dx)$ the Gaussian measure 
which is determined by its Fourier transform $\mathcal{F}(G_{\lambda})(\xi) = \exp\left(-\lambda \xi \cdot A \xi\right)$, $\xi \in \R^d$. 

We now choose $r=r(\lambda) = h(\lambda)$, where $h$ is a scale function defined by \cite[(13)]{KSz3} and put $\accentset{\circ}{p}_{\lambda} := \accentset{\circ}{p}^{r(\lambda)}_{\lambda}, {\bar{p}}_{\lambda} := {\bar{p}}^{r(\lambda)}_{\lambda}$. With this preparation, we can now use the decomposition \cite[(14)--(15)]{KSz3} to get the following representation for the kernel $p_{\lambda}$ (recall that $\lambda \in \Lambda$ is fixed):\ there exists a constant $c=c(\lambda)$ and a vector $z=z(b,\lambda)$ such that
$$
p_{\lambda}(x) = c \sigma_{\lambda}(x-z)+ \int_{\R^d} \bar{p}_{\lambda}(x-z-y) \sigma_{\lambda}(y)dy,
$$
where
$$
\sigma_{\lambda}(x) = \left\{\begin{array}{ll}
\accentset{\circ}{p}_{\lambda}(x) &\mbox{if } A \equiv 0, \\
\accentset{\circ}{p}_{\lambda} \ast G_{\lambda} (x) & \mbox{otherwise} \, .
\end{array}\right.
$$
By \cite[Lemma 1(f) and Lemma 3(b)]{KSz3} and the fact that $f$ is almost radial decreasing, we find $R=R(\lambda) >1$ large enough and $c_1=c_1(\lambda)$, $c_2=c_2(\lambda)$ such that
$$
\bar{p}_{\lambda}(w) \leq c_1 f(w) \quad \text{and} \quad \sigma_{\lambda}(w) \leq c_2 f(w), \quad |w| > R.
$$
Moreover, by \eqref{eq:c1} and the comparability $f(w) \asymp g(|w|)$, $w \neq 0$, there is $c_3>0$ such that 
$$
f(w-y) \leq c_3 f(w), \quad  |w| >2R, \ |y| \leq R. 
$$
Consequently, for every $|x|> 2R +|z|$,
\begin{align*}
p_{\lambda}(x) & = c \sigma_{\lambda}(x-z)+ \left(\int_{|y| \leq R} + \int_{|x-z-y|> R \atop |y| > R} + \int_{|x-z-y|\leq R}\right) \bar{p}_{\lambda}(x-z-y) \sigma_{\lambda}(y)dy\\
               & \leq cc_2 f(x-z) + \left(c_1c_3 \int_{|y| \leq R} \sigma_{\lambda}(y)dy + c_2c_3 \int_{|x-z-y|\leq R} \bar{p}_{\lambda}(x-z-y) dy \right) f(x-z) \\
							                    & \ \   + c_1 c_2 \int_{|x-z-y|> R \atop |y| > R} f(x-z-y) f(y) dy.
\end{align*}
Finally, since $K_f(R) < \infty$, we get 
$$
p_{\lambda}(x) \leq c_4 f(x-z),
$$
and one more use of \eqref{eq:c1} and the comparability with the profile $g$ leads us to the conclusion
$$
p_{\lambda}(x) \leq c_5 f(x).
$$
This completes the proof of (b) in cases (2)--(3). We omit the proof of $\limsup_{|x| \to \infty} \frac{p_{\lambda}(x)}{f(x)} < \infty$ for the case (1) as it is only a modification of the above argument and in fact it is much easier. It combines the decomposition \cite[(16)]{KSz3} with estimates in \cite[Corollary 3(f) and Lemma 3(a)]{KSz3}.

\bigskip

\noindent
(b) $\Rightarrow$ (c):
Fix $\lambda \in \Lambda$. First note that by \cite[Theorem 25.17]{Sato}, the integrability condition \eqref{eq:char_exp_id} gives 
\begin{align} \label{eq:exp_finite}
\int_{\R^d} e^{\gamma(\theta \cdot y)} p_{\lambda}(y) dy < \infty \quad \text{and} \quad \exp{\left( - \lambda \psi(- i \gamma \theta)\right)} = \int_{\R^d} e^{\gamma(\theta \cdot y)} p_{\lambda}(y) dy .
\end{align}
Therefore, by \eqref{eq:char_conv_id} and \eqref{eq:exp_finite},
$$
\lim_{t \to \infty} \frac{p_{\lambda}(t\theta - y)}{p_{\lambda}(t\theta)} 
   = \lim_{t \to \infty} \frac{p_{\lambda}(t\theta - y)}{\lambda f(t\theta)} \frac{\lambda f(t\theta)}{p_{\lambda}(t\theta)} = e^{\gamma(\theta \cdot y)}, \quad y \in \R^d,
$$
and
$$
\frac{p_{\lambda}^{2 \star}(t\theta)}{p_{\lambda}(t\theta)} = 2 \cdot\frac{p_{2\lambda}(t\theta)}{2\lambda f(t\theta)}\frac{\lambda f(t\theta)}{p_{\lambda}(t\theta)} = 2 \exp{\left( - \lambda \psi(- i \gamma \theta)\right)} = 2 \int_{\R^d} e^{\gamma(\theta \cdot y)} p_{\lambda}(y) dy,
$$
which finally implies that $p_{\lambda} \in \mathscr{S}_d(\theta, \gamma)$. The first equality in the last line follows from the fact that $P_{\lambda}$, $\lambda>0$, form a convolution semigroup of measures. This completes the proof of (c).
\bigskip

\noindent
(c) $\Rightarrow$ (a): Suppose (c) is true for some $\lambda \in \Lambda$. We need to show that $\liminf_{|x| \to \infty} \frac{p_{\lambda}(x)}{f(x)} >0$. To this end, we apply a version of the decomposition that was used in the proof of the first implication, by choosing e.g.\ $r =1/2$. We can just follow the argument from the proof of \cite[Lemma 2.6]{KSch} and easily show that in any case (1)--(3) there exist $c_6>0$ such that 
$$
p_{\lambda} (x) \geq c_6 \int_{|y| < 1/2} f(x-z-y) \mu_{\lambda}(dy), \quad |x| >1,
$$
where $z=z(\lambda,b) \in \R^d$ is fixed and $\mu_{\lambda}$ is a probability measure supported by full $\R^d$. 
Now, by using \eqref{eq:c1} and the comparability $f(w) \asymp g(|w|)$, $w \neq 0$, we can find $c_7 >0$ and $R>1$ large enough such that $f(x-z-y) \geq c_7 f(x)$, $|y| < 1/2$, $|x| >R$. This implies that
$$
p_{\lambda} (x) \geq c_5 f(x), \quad |x| >R,
$$
because $\mu_{\lambda}(B(0,1/2)) >0$. Since we already know from (c) that $\limsup_{|x| \to \infty} \frac{p_{\lambda}(x)}{f(x)} < \infty$, it gives that there exists $A > 1$ such that
$$
p_{\lambda}(x) \asymp g(|x|), \quad |x| \geq A.
$$
In consequence, we can apply Lemma \ref{lem:comparable} to show that $f \in \mathscr{S}_d(\theta, \gamma)$. This yields (a) and completes the proof of the lemma.

\end{proof}

\subsection{Examples} \label{sec:ex}

We now illustrate our results by discussing several specific examples of densities. We give positive and negative ones.  

Typical examples of almost decreasing densities are functions $f \in L^1(\R^d)$ which can be represented as
\begin{align}\label{eq:typical_f}
f(x) = \eta\left(\frac{x}{|x|}\right)g(|x|), \quad x \neq 0,
\end{align}
where $\eta : \mathds{S}^{d-1} \rightarrow [c_1, c_2]$, for some $0<c_1 \leq c_2 < \infty$, and $g:(0, \infty) \rightarrow (0, \infty)$ is a decreasing function. Clearly, in this case the function $g$ is a profile of the density $f$. If $\eta$ is continuous in $\theta \in \mathds{S}^{d-1}$, then
\begin{align} \label{eq:conv_eta}
\eta\left(\frac{t\theta - y}{|t\theta - y|}\right) \to \eta(\theta) \quad \text{as} \ \ t \to \infty, \ \ \text{for every} \ y \in \R^d.
\end{align} 
Moreover, when $\eta \equiv const.$, then the density $f$ is radial. Throughout this section, we assume that \eqref{eq:typical_f} and \eqref{eq:conv_eta} hold.

The profile $g$ may have a fairly general shape. Here we consider only two specific cases which are common in applications.

\begin{example} \label{ex:ex1} Let 
$$
g(r) = (1 \vee r)^{-\beta}, \quad \beta \geq 0. 
$$
Clearly, $f \in L^1(\R^d)$ if and only if $\beta > d$. For every fixed $\theta \in \mathds{S}^{d-1}$ and $y \in \R^d$, we have
\begin{align} \label{eq:first_polyn}
\lim_{t \to \infty} \frac{g(|t\theta-y|)}{g(|t\theta|)} = \lim_{t \to \infty} \frac{|t\theta-y|^{-\beta}}{t^{-\beta}} = \lim_{t \to \infty} \frac{1}{|\theta - y/t|^{\beta}} = 1.
\end{align}
Therefore, if \eqref{eq:conv_eta} is true for a given $\theta \in \mathds{S}^{d-1}$, then 
$$
\lim_{t \to \infty} \frac{f(t\theta-y)}{f(t\theta)} = 1, \quad \text{for every} \ \ y \in \R^d, 
$$
i.e., the condition \eqref{eq:c1} holds for such $\theta$ with $\gamma = 0$. It then makes sense to ask about the sub-exponentiality of the density $f$ in the direction $\theta$. We have to verify \eqref{eq:c2} (with $h \equiv \left\|f\right\|_1$).

As shown in Section \ref{sec:equivalence}, we are left to check the convergence $\lim_{r \to \infty}K_f(r) = 0$. Let $r>1$ and $|x| \geq 1$. We have
$$
\int_{|y-x|> r \atop |y|>r} \frac{f(x-y)}{f(x)}f(y) dy \leq \frac{c_2^2}{c_1} \int_{|y-x|> r \atop |y|>r} \left(\frac{|x|}{|x-y||y|}\right)^{\beta}dy 
$$
and, since $y$ and $x-y$ play a symmetric role, the integral on the right hand side is equal to
$$
2 \int_{|y-x|> r, \ |y|>r \atop |y| < |y-x|} \left(\frac{|x|}{|x-y||y|}\right)^{\beta} dy \leq 2 \int_{|y-x|> r, \ |y|>r \atop |y| < |y-x|} \left(\frac{|x|}{(|x|/2)|y|}\right)^{\beta} dy \leq 2^{1+\beta} \int_{|y|>r} |y|^{-\beta} dy
$$
(clearly, this extends to every profile $g$ having the doubling property $g(2r) \leq c g(r)$, $r >0$). 
This shows that $K_f(r) \leq 2^{1+\beta} (c_2^2/c_1) \int_{|y|>r} |y|^{-\beta} dy$. In particular, $\lim_{r \to \infty}K_f(r) = 0$ for the whole range of $\beta >d$. 
Consequently, $f \in \mathscr{S}_d(\theta, 0)$ and all assertions of our Theorem \ref{th:poisson_characterization} apply. 
\end{example}

\begin{example} \label{ex:ex2} Let 
 \begin{align} \label{eq:g_exp}
g(r) = e^{-m r} (1 \vee r)^{-\beta}, \quad \text{with} \quad m > 0, \ \beta \geq 0. 
 \end{align}
Clearly, due to exponential tempering, we now have $f \in L^1(\R^d)$ for the whole range of parameters above.

We first show that if \eqref{eq:conv_eta} is true for a given $\theta \in \mathds{S}^{d-1}$, then \eqref{eq:c1} holds for such $\theta$ with $\gamma = m$.
Let $\theta \in \mathds{S}^{d-1}$ and $y \in \R^d$. For $t>0$, 
    \begin{align}
        |t\theta|-|t\theta-y|
				= \frac{\sum_{i=1}^{d} (t\theta_i)^2  - \sum_{i=1}^{d} (t\theta_i - y_i)^2}{\sqrt{\sum_{i=1}^{d} (t\theta_i)^2} + \sqrt{\sum_{i=1}^{d} (t\theta_i - y_i)^2}}
				= \frac{2 (\theta \cdot y) -\frac{1}{t} \sum_{i=1}^{d}y_{i}^{2} }{|\theta| + \sqrt{\sum_{i=1}^{d} (\theta_{i} - \frac{y_i}{t})^{2}}}.
    \end{align}
It then follows from this calculation and \eqref{eq:first_polyn} that if \eqref{eq:conv_eta} is true for some $\theta \in \mathds{S}^{d-1}$, then
\begin{align*}
\lim_{t \to \infty} \frac{f(t\theta-y)}{f(t\theta)} 
      = \lim_{t \to \infty} \frac{\eta\left(\frac{t\theta - y}{|t\theta - y|}\right)|t\theta-y|^{-\beta}}{\eta(\theta)t^{-\beta}} e^{m( |t\theta|-|t\theta-y|)} = e^{m(\theta \cdot y)} , \quad \text{for every} \ \ y \in \R^d.
\end{align*}
This means that the condition \eqref{eq:c1} holds for such $\theta$ with $\gamma = m$, for all $\beta \geq 0$. 

We conclude this example by proving the following result. 

\end{example}
		
\begin{proposition} \label{prop:ex2} One has:
$$
f \in \mathscr{S}_d(\theta, m) \quad \Longleftrightarrow \quad \beta > \frac{d+1}{2}.
$$
\end{proposition}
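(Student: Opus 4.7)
My plan is to invoke Corollary \ref{cor:characterization}: since $f$ is almost radial decreasing and was shown in Example \ref{ex:ex2} to satisfy \eqref{eq:c1} with $\gamma=m$, one has $f \in \mathscr{S}_d(\theta,m)$ if and only if $\lim_{r\to\infty}K_f(r)=0$. The necessity of $\beta > (d+1)/2$ is then immediate from the fact that $h(\theta)<\infty$ is built into the definition of $\mathscr{S}_d(\theta,m)$: writing $y=s\omega$ in spherical coordinates and using the Laplace-type asymptotic
$$\int_{\dsphere} e^{ms\,\theta\cdot\omega}\,d\omega \asymp e^{ms}\,s^{-(d-1)/2},\qquad s \to \infty,$$
which comes from a Gaussian expansion about the maximum $\omega=\theta$ with angular measure $\sin^{d-2}\alpha\,d\alpha \asymp \alpha^{d-2}\,d\alpha$, yields $h(\theta) \asymp \int_1^{\infty} s^{(d-1)/2-\beta}\,ds$, which is finite precisely when $\beta>(d+1)/2$.

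For sufficiency, assume $\beta>(d+1)/2$. By the comparability reduction carried out in the proof of Theorem \ref{th:main} (see \eqref{eq:Kr_est}), it suffices to bound, uniformly in $t\geq 1$ and tending to zero as $r\to\infty$, the quantity
$$\frac{1}{g(t)}\int_{\substack{|y|>r\\|t\theta-y|>r}} g(|t\theta-y|)\,g(|y|)\,dy = (1\vee t)^{\beta}\int_{\substack{|y|>r\\|t\theta-y|>r}} e^{-m(|t\theta-y|+|y|-t)}(1\vee|y|)^{-\beta}(1\vee|t\theta-y|)^{-\beta}\,dy.$$
I would split the domain by the symmetry $y\leftrightarrow t\theta-y$, and further by location, into (i) $r<|y|\leq t/2$ (so $|t\theta-y|\asymp t$) and (ii) $|y|,|t\theta-y|>t/2$. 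In region (i) the factor $t^\beta$ cancels against $(1\vee|t\theta-y|)^{-\beta}$, and the geometric identity
$$|t\theta-y|+|y|-t = \frac{4|y|\,t\,\sin^2(\alpha/2)}{|t\theta-y|+t-|y|}\geq c\,|y|(1-\theta\cdot\omega),$$
where $\alpha$ denotes the angle between $\omega=y/|y|$ and $\theta$, together with the Laplace bound $\int_0^\pi e^{-cs\sin^2(\alpha/2)}\sin^{d-2}\alpha\,d\alpha \leq C(1\vee s)^{-(d-1)/2}$, reduces region (i) to $C\int_r^{\infty} s^{(d-1)/2-\beta}\,ds \leq Cr^{(d+1)/2-\beta}\to 0$. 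In region (ii) the polynomial factor is bounded by $\max(r,t/2)^{-2\beta}$, while the bulk estimate
$$\int_{\R^d} e^{-m(|t\theta-y|+|y|-t)}\,dy \leq C(1\vee t)^{(d+1)/2},$$
obtained by writing $y=(y_1,y')\in\R\times\R^{d-1}$ and using the expansion $|t\theta-y|+|y|-t \approx |y'|^2\,t/\bigl(2y_1(t-y_1)\bigr)$ to exhibit a Gaussian in $y'$ integrable against $y_1\in[0,t]$ (plus exponentially small tails outside the segment), gives a region (ii) contribution of size $\max(r,t/2)^{-2\beta}\,t^{\beta}\,(1\vee t)^{(d+1)/2}$. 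Distinguishing $t<2r$ and $t\geq 2r$, this is again $O(r^{(d+1)/2-\beta})$, uniformly in $t\geq 1$.

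The main obstacle is the uniform control in region (ii): its bulk order $(1\vee t)^{(d+1)/2}$ is exactly what forces the critical exponent, since the contribution of $y$ near the midpoint $t\theta/2$ is the one that just fails to be negligible at $\beta=(d+1)/2$, matching precisely the integrability threshold for $h(\theta)$ found in the necessity argument. For $t$ in the compact range $[1,2)$ both regions are handled trivially, because $(1\vee t)^\beta$ is bounded and $\int_{|y|>r}g(|y|)\,dy\to 0$. The explicit rate $K_f(r)=O(r^{(d+1)/2-\beta})$ produced by this analysis is, as remarked just after the statement of the example, a concrete illustration of the practical advantage of verifying \eqref{eq:KA0} instead of checking \eqref{eq:c2} directly.
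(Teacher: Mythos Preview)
Your argument is correct and reaches the same conclusion as the paper, but by a genuinely different route.  For necessity, you use the built-in requirement $h(\theta)<\infty$ together with the Laplace asymptotic on $\dsphere$, which is more direct than the paper's approach: the paper instead establishes the \emph{lower} bound $K_f(r)\geq c\,r^{(d+1)/2-\beta}$ (respectively $K_f(r)=\infty$) by estimating the integral from below along a sequence $x_n=(2n,0,\dots,0)$ and then invokes Corollary~\ref{cor:characterization}.  For sufficiency, both arguments aim at $K_f(r)\to 0$, but the decompositions differ.  The paper (Lemma~\ref{lem:final}) splits according to whether $|y|,|x-y|<|x|$ or not; on the inner region it uses the elementary pointwise bound $|x|-|x-y|\leq y_1$ (with $x=(x_1,0,\dots,0)$) to reduce to $\int_{|y|>r,\,y_1>0} e^{m(y_1-|y|)}|y|^{-\beta}\,dy$, and then a one-variable computation.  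Your split at $t/2$ and the ellipse identity in region~(i) lead to the same Laplace integral, while region~(ii) is handled via the bulk estimate $\int_{\R^d} e^{-m(|t\theta-y|+|y|-t)}\,dy\leq C(1\vee t)^{(d+1)/2}$.  This last estimate is correct (it follows cleanly from the prolate-spheroidal coarea formula: the level set $\{|y|+|t\theta-y|=t+u\}$ bounds a spheroid of volume $\asymp (t+u)\bigl(u(2t+u)\bigr)^{(d-1)/2}$), but your Gaussian-expansion sketch would need to be tightened in a formal write-up.  The trade-off: your necessity argument is cleaner and self-contained, whereas the paper's route yields the full two-sided asymptotic $K_f(r)\asymp r^{(d+1)/2-\beta}$ for $\beta>(d+1)/2$, which is slightly more than the proposition requires.
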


\begin{proof}
We only need to show that \eqref{eq:c2} holds if and only if $\beta > (d+1)/2$. 
By Corollary \ref{cor:characterization}, this can be reduced to proving the equivalence:
$$
K_f(r) \searrow 0 \quad \Longleftrightarrow \quad \beta > \frac{d+1}{2}. 
$$
In fact, we can see more -- a conclusion follows from Lemma \ref{lem:final} proven below. 
\end{proof}

\begin{lemma} \label{lem:final}
Let $f$ be as in \eqref{eq:typical_f} with $g$ given by \eqref{eq:g_exp}. Then for every $r >1$ we have
\begin{align} \label{eq:K_f_bounds}
K_f(r)
    & 
    \begin{cases}
        \asymp r^{\frac{d+1}{2} - \beta} &\text{if\ \ } \beta > \frac{d+1}{2},\\
        = \infty &\text{if\ \ } 0 \leq \beta \leq \frac{d+1}{2}.
    \end{cases}
\end{align}
\end{lemma}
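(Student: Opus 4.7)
The plan is to reduce $K_f(r)$ to a one-dimensional integral along the line through $x$, via a Gaussian integration in the perpendicular directions, and then run a case analysis in $\beta$. By the boundedness of $\eta$ on $\dsphere$ one has $K_f(r) \asymp \sup_{|x|\geq 1}(1/g(|x|))\int_{|y|>r,\,|x-y|>r}g(|x-y|)g(|y|)\,dy$, and by rotational symmetry of $g$ we may take $x=|x|e_1$ and write $y=te_1+z$ with $z\in e_1^{\perp}$.

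The crucial geometric input is a sharp comparison for the excess $\Delta(t,z):=|y|+|x-y|-|x|$. Writing $|y|-t=|z|^2/(|y|+t)$ for $0<t<|x|$ (and similarly for the second term) yields
\[
\Delta(t,z)=\frac{|z|^2}{\sqrt{t^2+|z|^2}+t}+\frac{|z|^2}{\sqrt{(|x|-t)^2+|z|^2}+(|x|-t)},
\]
so $\Delta(t,z)\asymp |z|^2|x|/(t(|x|-t))$ when $|z|\leq t\wedge(|x|-t)$ and $\Delta(t,z)\asymp |z|$ otherwise, with linear growth in $|t|$ or $t-|x|$ when $t\notin(0,|x|)$. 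Consequently $e^{-m\Delta}$ acts in the bulk as a Gaussian in $z$ with variance of order $t(|x|-t)/(m|x|)$, while the linear regime and $t\notin(0,|x|)$ produce only an $e^{-cr}$ tail that is absorbed into the polynomial bound. Likewise the range $|x|\leq 2r$ forces $\Delta\geq 2r-|x|$ and is exponentially negligible.

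Integrating out $z$ in the bulk yields the transverse volume factor $(t(|x|-t)/|x|)^{(d-1)/2}$. On $t\in(r,|x|-r)$ one has $g(t)g(|x|-t)/g(|x|)=t^{-\beta}(|x|-t)^{-\beta}|x|^{\beta}$ (the exponential cancels because $t+(|x|-t)=|x|$), and substituting $t=|x|s$ reduces $K_f(r)$, up to absolute constants, to
\[
\sup_{|x|\geq 2r}|x|^{(d+1)/2-\beta}\int_{r/|x|}^{1-r/|x|}(s(1-s))^{(d-1)/2-\beta}\,ds.
\]
A standard beta-type estimate closes the argument: for $\beta>(d+1)/2$ the exponent on $s(1-s)$ is $<-1$, the $s$-integral is $\asymp(r/|x|)^{(d+1)/2-\beta}$, the $|x|$-powers cancel, and both sides of $\asymp r^{(d+1)/2-\beta}$ are realized at any $|x|\geq 2r$; for $\beta=(d+1)/2$ the $s$-integral is $\asymp\log(|x|/r)$, unbounded as $|x|\to\infty$; for $\beta<(d+1)/2$ the $s$-integral is bounded but the prefactor $|x|^{(d+1)/2-\beta}$ diverges. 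Both borderline cases give $K_f(r)=\infty$.

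The principal technical obstacle is the uniform, sharp, two-sided control of $\Delta(t,z)$ across the transition between its quadratic and linear regimes in $z$ (and near $t\to 0,|x|$), and verifying that the contributions from $t\notin(0,|x|)$ and from $|x|\leq 2r$ really are absorbed into the polynomial bound from the bulk. Once this is set up, what remains is standard Gaussian and beta-function bookkeeping.
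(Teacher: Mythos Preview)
Your approach is correct but genuinely different from the paper's. For the upper bound, the paper first uses the symmetry between $y$ and $x-y$ to restrict to $|y|<|x-y|$, then applies the crude one-sided bounds $|x-y|\geq|x|/2$ and $|x|-|x-y|\leq y_1$ (with $x=|x|e_1$) to reduce to the $x$-independent integral $\int_{|y|>r,\,y_1>0}e^{m(y_1-|y|)}|y|^{-\beta}\,dy$, which it then evaluates by citing an outside computation; the lower bound is handled separately, again by citing \cite{KSz2}. Your route instead carries out a sharp two-sided analysis of the excess $\Delta(t,z)=|y|+|x-y|-|x|$ and reduces everything to the beta-type integral $\sup_{|x|\geq 2r}|x|^{(d+1)/2-\beta}\int_{r/|x|}^{1-r/|x|}(s(1-s))^{(d-1)/2-\beta}\,ds$. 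This is more work to make fully rigorous---you flag exactly the right obstacle, namely uniform control of $\Delta$ across the quadratic/linear transition and near $t\in\{0,|x|\}$---but it is self-contained, delivers upper and lower bounds simultaneously, and makes the critical exponent $(d+1)/2$ transparent (the $(d-1)/2$ from the Gaussian transverse volume, plus one from the endpoint behaviour of the $dt$-integral). One small caveat: the claim that $|x|\leq 2r$ is ``exponentially negligible'' via $\Delta\geq 2r-|x|$ is only effective when $|x|$ is bounded away from $2r$; for $|x|$ near $2r$ you should instead use monotonicity of $g$ (as the paper does for $|x|\leq r$) or simply observe that this range is dominated by the bulk value at $|x|=2r$.
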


\begin{proof}

We first establish the upper bound for $\beta > (d+1)/2$. We have
\begin{align*}
\int_{|y-x|> r \atop |y|>r} \frac{f(x-y)}{f(x)}f(y) dy & \leq \frac{c_2^2}{c_1} \int_{|y-x|> r \atop |y|>r} \frac{g(|x-y|)}{g(|x|)}g(|y|)dy.
\end{align*}
Denote the integral on the right hand side of the above inequality by $I:=I(x,r)$. Clearly, we only need to find the upper bound for $I$.
Suppose first that $1 \leq |x| \leq r$. Then, by monotonicity of $g$,  
$$
I   \leq \frac{g(r)}{g(|x|)} \int_{|y-x|> r \atop |y|>r} g(|y|)dy \leq \int_{|y|>r} g(|y|)dy.
$$
Let now $|x| > r$. Since $y$ and $x-y$ play a symmetric role, we can observe that
\begin{align*}
I & \leq \int_{r < |y-x| < |x| \atop r < |y| < |x|} \frac{g(|x-y|)}{g(|x|)}g(|y|)dy 
                                              + 2 \int_{|y-x| \geq |x| \atop |y| > r} \frac{g(|x-y|)}{g(|x|)}g(|y|)dy \\
	& \leq  2\int_{r < |y-x| < |x|, \ r < |y| < |x| \atop |y| < |x-y|} \frac{g(|x-y|)}{g(|x|)}g(|y|)dy
                                                         + 2 \int_{|y| > r}g(|y|)dy.
\end{align*}
Denote by $J:=J(x,r)$ the first integral on the right hand side. For $d=1$ the exponential terms under this integral cancels and we can follow the estimates from the second part of Example \ref{ex:ex1}, getting the desired bound. Therefore, we are left to consider $d \geq 2$ only. We see that the function $J(\cdot,r)$ is rotation invariant, i.e.\ $J = J(|x|,r)$. In order to simplify our further calculations, we may then assume that $x=(x_1,0,\ldots,0)$ with $x_1>r$. By direct calculation, we can check that $0 \leq |x|-|x-y| \leq y_1$ on the domain of integration of $J$. Hence
\begin{align*}
J & = \int_{r < |y-x| < |x|, \ r < |y| < |x| \atop |y| < |x-y|} e^{m(|x|-|x-y|)} \left(\frac{|x|}{|x-y|}\right)^{\beta} g(|y|)dy \\
  & \leq \int_{r < |y-x| < |x| \atop r < |y| < |x|} \left(\frac{|x|}{(|x|/2)}\right)^{\beta} e^{m(y_1-|y|)}|y|^{-\beta}dy 
	\leq 2^{\beta} \int_{|y| > r \atop y_1 > 0} e^{m(y_1-|y|)}|y|^{-\beta}dy
\end{align*}
Now, by using the hyper-spherical coordinates or a modification of the argument from \cite[p.\ 61]{KSch}, we can show that \
$$
J \leq c_3 \int_r^{\infty} y_1^{\frac{d-1}{2}-\beta} dy_1 = c_4 r^{\frac{d+1}{2}-\beta}.
$$
By collecting all the estimates above and by comparing $\int_{|y|>r} g(|y|)dy = \int_r^{\infty} e^{-ms} s^{d-1-\beta} ds$ with $r^{\frac{d+1}{2}-\beta}$, we conclude that the upper bound in \eqref{eq:K_f_bounds} holds true.

We now give the lower bound. Let $x_n = (2n,0,\ldots,0)$, $n > r$. By following directly the estimates in \cite[pages 382--383]{KSz2}, we can show that 
\begin{align*}
\int_{|y-x_n|> r \atop |y|>r} \frac{f(x_n-y)}{f(x_n)}f(y) dy & \geq \frac{c_1^2}{c_2} \int_{|y-x_n|> r \atop |y|>r} \frac{g(|x_n-y|)}{g(|x_n|)}g(|y|)dy \\
                         & \geq \frac{c_1^2}{c_2} \int_{r < y_1 < n} \frac{g(|x_n-y|)}{g(|x_n|)}g(|y|)dy \geq c_5 \int_r^{n} y_1^{\frac{d-1}{2}-\beta} dy_1.
\end{align*}
Consequently, 
$$
K_f(r) \geq c_5 \int_r^{n} y_1^{\frac{d-1}{2}-\beta} dy_1, \quad n > r,
$$
which implies that $K_f(r) \geq c_6 r^{\frac{d+1}{2}-\beta}$ for $\beta > (d+1)/2$, and $K_f(r) = \infty$ for $ \beta \in [0,(d+1)/2]$. This completes the proof of the lemma.
\end{proof}

\end{document}